\def\part#1{\frac{\partial\phantom{#1}}{\partial#1}}
\newtheorem{thm}{Theorem}
\newtheorem{theorem}[thm]{Theorem}
\newtheorem{proposition}[thm]{Proposition}
\newtheorem{lemma}[thm]{Lemma}
\newtheorem{corollary}[thm]{Corollary}
\newtheorem{conjecture}[thm]{Conjecture}
\newtheorem{claim}[thm]{Claim}
\newenvironment{proof}{\begin{trivlist}\item[]{\bf Proof} }%
{\hfill $\Box$ \end{trivlist}}
\newenvironment{definition}{\begin{trivlist}\item[]{\bf Definition}\em }%
{\end{trivlist}}
\newenvironment{remark}{\begin{trivlist}\item[]{\bf Remark} }%
{\end{trivlist}}
\newenvironment{example}{\begin{trivlist}\item[]{\bf Example} }%
{\end{trivlist}}
\newenvironment{question}{\begin{trivlist}\item[]{\bf Question} }%
{\end{trivlist}}
\def\Z{\ifmmode{{\mathbb Z}}\else{${\mathbb Z}$}\fi}
\def\Q{\ifmmode{{\mathbb Q}}\else{${\mathbb Q}$}\fi}
\def\C{\ifmmode{{\mathbb C}}\else{${\mathbb C}$}\fi}
\def\P{\ifmmode{{\mathbb P}}\else{${\mathbb P}$}\fi}
\def\H{\ifmmode{{\mathrm H}}\else{${\mathrm H}$}\fi}
\def\B{\ifmmode{{\mathcal B}}\else{${\mathcal B}$}\fi}
\def\E{\ifmmode{{\mathcal E}}\else{${\mathcal E}$}\fi}
\def\F{\ifmmode{{\mathcal F}}\else{${\mathcal F}$}\fi}
\def\K{\ifmmode{{\mathcal K}}\else{${\mathcal K}$}\fi}
\def\L{\ifmmode{{\mathcal L}}\else{${\mathcal L}$}\fi}
\def\M{\ifmmode{{\mathcal M}}\else{${\mathcal M}$}\fi}
\def\N{\ifmmode{{\mathcal N}}\else{${\mathcal N}$}\fi}
\def\O{\ifmmode{{\mathcal O}}\else{${\mathcal O}$}\fi}
\def\U{\ifmmode{{\mathcal U}}\else{${\mathcal U}$}\fi}
\def\V{\ifmmode{{\mathcal V}}\else{${\mathcal V}$}\fi}
\def\X{\ifmmode{{\mathcal X}}\else{${\mathcal X}$}\fi}
\def\Br{\ifmmode{{\mathrm{Br}}}\else{${\mathrm{Br}}$}\fi}
\def\OG{\ifmmode{\widetilde{\cal M}_4}\else{$\widetilde{\cal M}_4$}\fi}
\def\D{\ifmmode{{\mathcal D}^b}\else{${{\mathcal
    D}^b}$}\fi}
\def\Shah{\ifmmode{\amalg\hspace*{-3.5pt}\amalg}\else{$\amalg\hspace*{-3.5pt}\amalg$}\fi}
\begin{document}

\title{A finiteness theorem for Lagrangian fibrations\footnote{2010 {\em Mathematics Subject
  Classification.\/} 14D06, 14K10, 53C26.}}
\author{Justin Sawon}
\date{October, 2012}
\maketitle

\begin{abstract}
We consider (holomorphic) Lagrangian fibrations $\pi:X\rightarrow\P^n$ that satisfy some natural hypotheses. We prove that there are only finitely many such Lagrangian fibrations up to deformation.
\end{abstract}

\maketitle

\section{Introduction}

Let $X$ be an irreducible holomorphic symplectic manifold of dimension $2n$, i.e., $X$ is a compact K{\"a}hler manifold admitting a holomorphic two-form $\sigma$ which is non-degenerate in the sense that $\sigma^{\wedge n}$ trivializes the canonical bundle $K_X=\Omega^{2n}$. In this context, irreducibility means that $X$ is simply-connected and $\H^0(X,\Omega^2)$ is generated by $\sigma$. Ultimately we would like to classify such manifolds up to deformation; in this article we assume that $X$ admits the additional structure of a {\em Lagrangian fibration\/}.

Matsushita~\cite{matsushita99} proved that if $\pi:X\rightarrow B$ is a fibration on $X$ then $\mathrm{dim}B$ must equal $n$, the generic fibre must be an $n$-dimensional complex torus, and every fibre must be Lagrangian with respect to the holomorphic symplectic form $\sigma$. Hwang~\cite{hwang08} later proved that the base must be isomorphic to $\P^n$ if it is smooth and $X$ is projective. In this article, by {\em Lagrangian fibration\/} we shall mean an irreducible holomorphic symplectic manifold admitting a fibration $\pi:X\rightarrow\P^n$.

Our main result is:
\begin{theorem}
\label{main}
Fix positive integers $n$ and $d_1,\ldots,d_n$, with $d_1|d_2|\cdots |d_n$. Consider Lagrangian fibrations $\pi:X\rightarrow\P^n$ that satisfy
\begin{enumerate}
\item $\pi:X\rightarrow\P^n$ admits a global section,
\item there is a very ample line bundle on $X$ which gives a polarization of type $(d_1,\ldots,d_n)$ when restricted to a generic smooth fibre $X_t$,
\item over a generic point $t$ of the discriminant locus (i.e., the hypersurface $\Delta\subset\P^n$ parametrizing singular fibres) the fibre $X_t$ is a rank-one semi-stable degeneration of abelian varieties,
\item a neighbourhood $U$ of a generic point $t\in\P^n$ describes a maximal variation of abelian varieties.
\end{enumerate}
Then there are finitely many such Lagrangian fibrations up to deformation.
\end{theorem}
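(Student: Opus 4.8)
The plan is to reduce the classification to that of the polarised abelian scheme over the complement of the discriminant, to show that its classifying map into a fixed moduli space is forced to have bounded degree, and then to reconstruct $X$ from that map up to finitely many deformation types. Set $U:=\mathbb{P}^n\setminus\Delta$. By hypothesis (1) the restriction $\pi^{-1}(U)\to U$ is an abelian scheme of relative dimension $n$, and by (2) it carries a polarisation of type $D=(d_1,\dots,d_n)$; this data is classified by a morphism $f\colon U\to\mathcal{A}_{n,D}$ into the moduli space of abelian $n$-folds with a polarisation of type $D$, a \emph{fixed} quasi-projective variety since $n$ and $D$ are fixed. Hypothesis (4) says precisely that the Kodaira--Spencer map of this family is injective at a generic point of $U$, i.e. that $df$ is generically injective; since $\dim U=n\le\dim\mathcal{A}_{n,D}$ this forces $f$ to be generically finite onto its image. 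Passing to the Baily--Borel compactification $\mathcal{A}_{n,D}^{\ast}$ (a fixed projective variety carrying the ample Hodge line bundle $\bar\lambda$), $f$ extends, and its pullback of $\bar\lambda$ is the object to be identified.

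The key point is that this pullback is $\mathcal{O}_{\mathbb{P}^n}(n+1)$. On an abelian scheme the relative dualising sheaf equals the determinant of the Hodge bundle $\mathbb{E}:=\pi_{\ast}\Omega^1_{\pi^{-1}(U)/U}$, and on $\mathcal{A}_{n,D}^{\ast}$ the line bundle $\bar\lambda$ restricts to the canonical (Deligne) extension of $\det\mathbb{E}$, whose reflexive hull is a well-defined line bundle $\overline{\det\mathbb{E}}$ on the smooth variety $\mathbb{P}^n$. On the other hand, by Matsushita's theorem $\pi_{\ast}\mathcal{O}_X=\mathcal{O}_{\mathbb{P}^n}$, and since $K_X=\mathcal{O}_X$ we have $\omega_{X/\mathbb{P}^n}=\pi^{\ast}\mathcal{O}_{\mathbb{P}^n}(n+1)$, hence $\pi_{\ast}\omega_{X/\mathbb{P}^n}=\mathcal{O}_{\mathbb{P}^n}(n+1)$. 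Hypothesis (3) guarantees that over a dense open subset of $\Delta$ the fibration is a semistable degeneration with smooth total space, and over such a locus the relative dualising sheaf coincides with the canonical extension of $\det\mathbb{E}$. Thus $\mathcal{O}_{\mathbb{P}^n}(n+1)$ and $\overline{\det\mathbb{E}}$ agree on $\mathbb{P}^n$ away from a subset of codimension $\ge 2$, hence agree. (Equating $n+1$ with the sum of the local Hodge-theoretic contributions of the rank-one semistable fibres in fact pins down $\deg\Delta$; for $n=1$ this recovers the classical $\deg\Delta=24$.) Since $f$ is generically finite onto its image, the projection formula then bounds both the degree of $\overline{f(U)}$ with respect to $\bar\lambda$ and the degree of $f$ onto its image by $(n+1)^n$; such maps form a bounded family, so up to deformation only finitely many of them occur.

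Next I would reconstruct $X$. Along a connected family of classifying maps the pulled-back polarised abelian scheme over $U$ deforms; lifting $f$ from the coarse space to the moduli stack introduces only finite ambiguity (automorphism groups of polarised abelian varieties being finite, the stack is a finite gerbe over its coarse space), and the section in (1) need be remembered only up to the translations relating its possible values, so $\pi^{-1}(U)\to U$ ranges over finitely many deformation families. Any irreducible holomorphic symplectic manifold compactifying a given $\pi^{-1}(U)\to U$ over $\mathbb{P}^n$ is birational over $\mathbb{P}^n$ to any other such compactification — the identity on $\pi^{-1}(U)$ extends to a birational map — and by Huybrechts' theorem birational irreducible holomorphic symplectic manifolds are deformation equivalent, so the compactification is unique up to deformation. (Alternatively, the very ample line bundle of (2) has Beauville--Bogomolov invariants controlled by the now-bounded period data, which places $X$ directly in a bounded family of polarised varieties.) Combining the three finiteness statements gives the theorem.

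The main obstacle is the identification $\overline{\det\mathbb{E}}=\mathcal{O}_{\mathbb{P}^n}(n+1)$: everything after it is boundedness bookkeeping, but this equality — which converts an a priori unbounded moduli problem into a bounded one — requires combining Matsushita's description of $\pi_{\ast}\mathcal{O}_X$ with the semistable comparison of the relative dualising sheaf and the Hodge bundle, and it is exactly here that hypothesis (3) is indispensable. A secondary technical point is making the reconstruction of $X$ from its classifying map precise, namely controlling the gerbe ambiguity and the choice of compactification, and checking that the bounded-degree period maps genuinely vary in a finite-type family after the discriminant has been resolved.
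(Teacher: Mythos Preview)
Your degree-bound argument is essentially the paper's: the identification of the pulled-back Hodge bundle with $\mathcal{O}(n+1)$ is the heart of the matter, and your direct computation via $\pi_*\omega_{X/\mathbb{P}^n}=\mathcal{O}(n+1)\otimes\pi_*\mathcal{O}_X$ is a clean variant of the paper's route through Grothendieck--Serre duality and Koll\'ar's $R^n\pi_*\mathcal{O}_X\cong\omega_{\mathbb{P}^n}$. One difference: you work with the Baily--Borel compactification, on which the Hodge bundle is already ample, whereas the paper uses Mumford's partial compactification $\mathcal{A}^*_{d_1,\ldots,d_n}$, where $L$ alone is \emph{not} ample and one must take $H=aL-\sum b_iD_i$. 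Both choices lead to the same bound; yours is arguably more direct. (A minor slip: the comparison does not ``pin down'' $\deg\Delta$ but only bounds it---the paper gets $\deg\Delta\le 12(n+1)$ in the principally polarised case.)

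The reconstruction half has a genuine gap that you underrate as ``boundedness bookkeeping''. Your Huybrechts argument correctly shows that two IHS compactifications of the \emph{same} abelian scheme over $U$ are deformation equivalent, and this neatly replaces the paper's Matsusaka--Mumford injectivity step. But it is not enough. Knowing that maps of bounded degree form a finite-type family, and that for each fixed map the compactification is unique up to deformation, does \emph{not} yield finitely many deformation classes of Lagrangian fibrations: if $f_0$ and $f_1$ lie in the same connected component of the space of maps, the intermediate $f_t$ may admit no IHS compactification at all, so there is no path from $X_0$ to $X_1$. What you actually need is that the sublocus of maps which \emph{do} arise from Lagrangian fibrations is constructible, and that over each of its components the fibrations assemble into a genuine family. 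The paper spends half its length on precisely this, building a chain $V^{(1)}\supset\cdots\supset V^{(7)}$ of conditions (defined in codimension one; generically immersive; image in $\mathcal{A}^*$; the bundle $\pi_*A$ extends over $\Delta_0$; the associated map to a Hilbert scheme extends; the resulting total space is smooth; and finally IHS) and checking that each is open, closed, or locally closed. The passage through the Hilbert scheme is what produces the universal family and is essential, not a secondary technicality.
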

The precise meanings of ``rank-one semi-stable degeneration'' and ``maximal variation'' will be made clear in Section~2, and these hypotheses are discussed further in Subsections~4.1 to~4.3. Some restrictions on the types of polarizations that can occur in dimension $2n=4$ were previously found by the author~\cite{sawon08i}.

The idea of the proof is as follows. The first two hypotheses ensure the existence of a ``classifying map''
$$\phi:\P^n\backslash\Delta\rightarrow\mathcal{A}_{d_1,\ldots,d_n}$$
from the complement of the discriminant locus in the base to the moduli space of $(d_1,\ldots,d_n)$-polarized abelian varieties. The third hypothesis implies that $\phi$ extends over generic points of $\Delta$ to a map
$$\bar{\phi}:\P^n\backslash\Delta_0\rightarrow\mathcal{A}^*_{d_1,\ldots,d_n},$$
where $\Delta_0\subset\Delta$ is codimension two in $\P^n$ and $\mathcal{A}^*_{d_1,\ldots,d_n}$ is the partial compactification of $\mathcal{A}_{d_1,\ldots,d_n}$. We choose an ample line bundle $H$ on $\mathcal{A}^*_{d_1,\ldots,d_n}$, and consider the pull-back $\bar{\phi}^*H$ to $\P^n\backslash\Delta_0$. We establish an upper bound on the degree of $\bar{\phi}^*H$ (the fourth hypothesis ensures that this degree is non-zero), from which we conclude that $\bar{\phi}$ must belong to one of finitely many families of rational maps from $\P^n$ to $\mathcal{A}^*_{d_1,\ldots,d_n}$.

Next we study the space of rational maps from $\P^n$ to $\mathcal{A}^*_{d_1,\ldots,d_n}$. We describe conditions on a map that are necessary and sufficient to ensure that a unique Lagrangian fibration $\pi:X\rightarrow\P^n$ can be reconstructed from it. We prove that each of these conditions is either open, closed, or a finite union of locally closed conditions. In effect, this establishes a bijection between Lagrangian fibrations and a constructible algebraic subset of the space of rational maps from $\P^n$ to $\mathcal{A}^*_{d_1,\ldots,d_n}$ of bounded degree. The latter consists of finitely many connected components, thereby implying the finiteness of Lagrangian fibrations $\pi:X\rightarrow\P^n$ up to deformation.

Theorem~\ref{main} may be compared to a result of Gross~\cite{gross94}, who proved that there are finitely many elliptic Calabi-Yau threefolds up to deformation. The bulk of Gross's article goes into dealing with the relation between a fibration and its relative Jacobian, and in understanding the singular fibres; we avoid both of these difficulties by imposing hypotheses 1 and 3. Our main challenge is to deal with the classifying map, which is relatively simple in the elliptic case. Huybrechts~\cite{huybrechts03} also proved finiteness theorems for holomorphic symplectic manifolds; his results do not require the structure of a Lagrangian fibration, but instead involve $\H^2(X,\mathbb{Z})$ and the Beauville-Bogomolov quadratic form.

The author would like to thank Valery Alexeev, Samuel Grushevsky, Christopher Hacon, J{\'a}nos Koll{\'a}r, and Jun-Muk Hwang for helpful suggestions, and the Korea Institute for Advanced Studies for hospitality. The author gratefully acknowledges support from the NSF, grant number DMS-1206309.

\section{Bounding the degree}

Our goal in this section is to introduce the classifying map of a Lagrangian fibration and establish a bound for its degree.

\subsection{Classifying maps}

We start by recalling some of the theory of moduli spaces of abelian varieties (see Grushevsky~\cite{grushevsky09} and the references contained therein). Consider first the principally polarized case, and let $\mathcal{A}_n:=\mathcal{A}_{1,\ldots, 1}$ be the moduli space of $n$-dimensional principally polarized abelian varieties. Recall that there is a {\em partial compactification\/} of $\mathcal{A}_n$, given as a set by
$$\mathcal{A}^*_n=\mathcal{A}_n\sqcup (\mathcal{X}_{n-1}/\pm 1)$$
where $\mathcal{X}_{n-1}\rightarrow\mathcal{A}_{n-1}$ is the universal principally polarized abelian variety of dimension $n-1$. The codimension one boundary $\mathcal{X}_{n-1}/\pm 1$ parametrizes degenerate abelian varieties in the following way: if $x\in\mathcal{X}_{n-1}$, then $x$ is a point on an $(n-1)$-dimensional abelian variety $A$. Since $A$ is principally polarized, we can also think of $x$ as a point on the dual variety $\hat{A}=\mathrm{Pic}^0A\cong A$, corresponding to a degree zero line bundle $\mathcal{L}_x$ on $A$. Take the $\P^1$-bundle $\P(\O_A\oplus\mathcal{L}_x)$ over $A$ and glue the zero and infinity sections; these sections are both isomorphic to $A$, under the projection $\P(\O_A\oplus\mathcal{L}_x)\rightarrow A$, but when we glue them we include a translation by $x\in A$. Let $Y$ denote the resulting variety. Note that $Y$ is not normal: topologically it looks like the product of $A$ with a nodal rational curve, but as a complex algebraic variety $Y$ is only a product if $x=0$. Starting instead with $-x$ produces the same variety, so the construction only depends on $[x]\in\mathcal{X}_{n-1}/\pm 1$.
\begin{definition}
The resulting variety $Y$ is known as a rank-one semi-stable degeneration of principally polarized abelian varieties.
\end{definition}
Note that the smooth locus of $Y$, which is a $\C^*$-bundle over $A$, is a semi-abelian variety. Then $Y$ is a toric compactification of this semi-abelian variety, with rank-one toric part.

For a general polarization of type $(d_1,\ldots,d_n)$, a rank-one semi-stable degeneration can consist of several irreducible components. Each component will be a $\P^1$-bundle over an $(n-1)$-dimensional abelian variety $A$ (unless there is only one component), and they will be glued together to form a cycle, with the infinity section of one glued to the zero section of the next. Overall, the composition of all the gluings will include a translation by an element $x\in A$. There is a partial compactification $\mathcal{A}^*_{d_1,\ldots,d_n}$ of the moduli space $\mathcal{A}_{d_1,\ldots,d_n}$ of $(d_1,\ldots,d_n)$-polarized abelian varieties, whose boundary divisor
$$\mathcal{A}^*_{d_1,\ldots,d_n}\backslash \mathcal{A}_{d_1,\ldots,d_n}$$
parametrizes these rank-one semi-stable degenerations. To determine all the possibilities for the number of components of a degeneration and the polarization type of $A$ is a complicated combinatorial problem, but we do not need an explicit solution here. It is enough to note that each possibility yields an irreducible component of the boundary divisor.

Now suppose that $Z\rightarrow U$ is a family of $(d_1,\ldots,d_n)$-polarized abelian varieties over a small open set $U$, in the analytic topology. There is a map $U\rightarrow\mathcal{A}_{d_1,\ldots,d_n}$ associated to this family.
\begin{definition}
We say that $Z/U$ describes a maximal variation of abelian varieties if the map $U\rightarrow\mathcal{A}_{d_1,\ldots,d_n}$ is an immersion.
\end{definition}

Having made these clarifications, let us turn to the situation of Theorem~\ref{main}, i.e., suppose that $\pi:X\rightarrow\P^n$ is a Lagrangian fibration satisfying hypotheses~1 to~4. Over the complement $\P^n\backslash\Delta$ of the discriminant locus we have a family of $n$-dimensional complex tori. The first hypothesis, existence of a global section, rigidifies this family by providing a basepoint in each fibre. By the second hypothesis, the fibres are $(d_1,\ldots,d_n)-$polarized. We therefore have a map
$$\phi:\P^n\backslash\Delta\rightarrow\mathcal{A}_{d_1,\ldots,d_n}.$$
By the third hypothesis, the singular fibre of $X\rightarrow\P^n$ over a generic point of $\Delta$ is a rank-one semi-stable degeneration, and therefore $\phi$ extends over generic points of $\Delta$ to a map
$$\bar{\phi}:\P^n\backslash\Delta_0\rightarrow\mathcal{A}^*_{d_1,\ldots,d_n},$$
where $\Delta_0\subset\Delta$ is codimension at least two in $\P^n$.
Finally, $\phi$ is an immersion by the fourth hypothesis. Note, however, that $\bar{\phi}$ might not be an immersion along $\Delta\backslash\Delta_0$.
\begin{definition}
We call $\bar{\phi}$ the classifying map of the Lagrangian fibration $\pi:X\rightarrow\P^n$.
\end{definition}


\subsection{Line bundles}

We first consider line bundles on the moduli space $\mathcal{A}_n$ of principally polarized abelian varieties and its partial compactification $\mathcal{A}^*_n$. The {\em Hodge line bundle\/} on $\mathcal{A}_n$ is defined as
$$L:=\pi_*\omega_{\mathcal{X}_n/\mathcal{A}_n}$$
where $\pi:\mathcal{X}_n\rightarrow\mathcal{A}_n$ is the universal abelian variety and $\omega_{\mathcal{X}_n/\mathcal{A}_n}$ is the relative canonical bundle. The local system $R^n\pi_*\mathbb{C}_{\mathcal{X}_n/\mathcal{A}_n}$ is a variation of Hodge structure over $\mathcal{A}_n$, and $L$ is the bottom piece of the Hodge filtration. In other words, we have an embedding as a sub-bundle
$$L\hookrightarrow \mathcal{O}_{\mathcal{A}_n}\otimes_{\mathbb{C}}R^n\pi_*\mathbb{C}_{\mathcal{X}_n/\mathcal{A}_n}.$$
The Hodge line bundle can be extended to $\mathcal{A}^*_n$ in a natural way. Namely, $\mathcal{O}_{\mathcal{A}_n}\otimes_{\mathbb{C}}R^n\pi_*\mathbb{C}_{\mathcal{X}_n/\mathcal{A}_n}$ has an upper canonical extension to a locally free sheaf on $\mathcal{A}^*_n$, and $\pi_*\omega_{\mathcal{X}^*_n/\mathcal{A}^*_n}$ coincides with the corresponding upper canonical extension of the bottom piece of the Hodge filtration (see Theorem~8.10.7 of Koll{\'a}r~\cite{kollar07}),
where $\pi:\mathcal{X}^*_n\rightarrow\mathcal{A}^*_n$ is the extension of the universal abelian variety to the partial compactification $\mathcal{A}^*_n$ (we are abusing notation, using $\pi$ to denote both projections $\mathcal{X}_n\rightarrow\mathcal{A}_n$ and $\mathcal{X}^*_n\rightarrow\mathcal{A}^*_n$, but the meaning should be unambiguous). Then $\pi_*\omega_{\mathcal{X}^*_n/\mathcal{A}^*_n}$, which we will also denote by $L$, is the required extension of the Hodge line bundle to $\mathcal{A}^*_n$.

\begin{remark}
The Hodge line bundle $L$ on $\mathcal{A}_n$ can also be defined as the bundle of scalar modular forms of weight one, which again admits a natural extension to $\mathcal{A}^*_n$ (see Grushevsky~\cite{grushevsky09}).
\end{remark}

The boundary divisor
$$D:=\mathcal{X}_{n-1}/\pm 1=\mathcal{A}^*_n\backslash\mathcal{A}_n$$
in $\mathcal{A}^*_n$ is irreducible. The Picard group of $\mathcal{A}^*_n$ is generated over $\mathbb{Q}$ by $L$ and $D$ (we will use the same notation to denote both divisors and their associated line bundles; the intended meaning will be clear from context). Note that since $\mathcal{A}_n$ and $\mathcal{A}^*_n$ are really moduli stacks, we work with $\mathbb{Q}$-divisors. The nef and ample cones of $\mathcal{A}^*_n$ are determined by the following result.
\begin{theorem}[Hulek-Sankaran~\cite{hs04}]
The $\mathbb{Q}$-divisor $aL-bD$ on $\mathcal{A}^*_n$ is nef iff $a\geq 12b\geq 0$ and ample iff $a>12b>0$.
\end{theorem}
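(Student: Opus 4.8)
The plan is to exploit that $\mathrm{Pic}(\mathcal{A}^*_n)_{\mathbb{Q}}=\mathbb{Q}\langle L,D\rangle$ is two-dimensional, so the nef cone is a closed convex cone in a plane and is therefore cut out by (at most) two extremal rays. I would first produce test curves that force the nef cone into the region $\{a\geq 12b\geq 0\}$, and then show that the two boundary rays of that region, namely $\mathbb{R}_{\geq 0}L$ and $\mathbb{R}_{\geq 0}(12L-D)$, consist of nef classes; by convexity the whole region is then nef, and its interior is the ample cone. Since $\mathcal{A}^*_n$ is only a partial compactification, one should either restrict attention to complete curves throughout, or pass to a projective (e.g.\ perfect-cone) compactification whose extra boundary has codimension $\geq 2$, so that its Picard group is still $\mathbb{Q}\langle L,D\rangle$ and Kleiman's criterion applies, making the equality ``interior of the nef cone $=$ ample cone'' legitimate.

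For necessity I would use two curves. The first, $C_1$, is a complete curve lying in a single fibre of the structure map $D=\mathcal{X}_{n-1}/\pm 1\to\mathcal{A}_{n-1}$ over a general point (for $n=2$ this fibre is $\mathbb{P}^1$; for $n\geq 3$ take any complete curve inside it). Such a fibre is contracted by the Baily--Borel morphism $\mathcal{A}^*_n\to\mathcal{A}^{\mathrm{Sat}}_n$, and $L$ is pulled back from the ample Hodge class downstairs, so $L\cdot C_1=0$; meanwhile $\mathcal{O}_{\mathcal{A}^*_n}(D)|_D$ restricts on such a fibre to a negative multiple of the polarization, so $D\cdot C_1<0$. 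Hence $(aL-bD)\cdot C_1\geq 0$ forces $b\geq 0$, strictly if $aL-bD$ is ample. The second, $C_2$, is the closure in $\mathcal{A}^*_n$ of the locus $\{[E\times A']\mid E\in\mathcal{A}_1\}$ for a fixed general $(n-1)$-dimensional principally polarized abelian variety $A'$; it is a copy of $\mathcal{A}^*_1$ meeting $D$ transversally in the one point $[(\mathrm{nodal\ cubic})\times A']$, and the restrictions of $L$ and $D$ to it are the Hodge class $L_1$ and the cusp $D_1$ of $\mathcal{A}^*_1$. The classical identity $12L_1=D_1$ on $\mathcal{A}^*_1$ (equivalently $12\lambda=\delta$ for a semistable elliptic surface; concretely, the weight-$12$ discriminant form is nowhere zero on $\mathcal{A}_1$ and has a simple zero at the cusp) gives $D\cdot C_2=12\,L\cdot C_2$ with $L\cdot C_2>0$, so $(aL-bD)\cdot C_2\geq 0$ forces $a\geq 12b$, strictly if $aL-bD$ is ample. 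This bounds the nef cone by $\{a\geq 12b\geq 0\}$ and the ample cone by $\{a>12b>0\}$.

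For sufficiency it remains to show $L$ and $12L-D$ are nef. That $L$ is nef is immediate, as it is the pull-back of an ample $\mathbb{Q}$-divisor from $\mathcal{A}^{\mathrm{Sat}}_n$. The real content is that $12L-D$ is nef, i.e.\ $D\cdot C\leq 12\,L\cdot C$ for every complete irreducible curve $C\subset\mathcal{A}^*_n$. For $C\subset D$ one restricts to the boundary and argues by induction on $n$ via $D=\mathcal{X}_{n-1}/\pm 1$. For $C\not\subset D$, the family of abelian $n$-folds over the normalisation of $C$ has only rank-one semistable degenerations (this is automatic, since $\mathcal{A}^*_n$ contains no deeper boundary), and $D\cdot C$ counts these degenerations with multiplicity; the required inequality is then an Arakelov-type lower bound on the degree of the Hodge bundle, modelled on the equality $\delta=12\lambda$ for semistable elliptic surfaces. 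Alternatively, one could try to show that some multiple of $12L-D$ is base-point free by exhibiting enough Siegel cusp forms of weight $12$ without common zeros on $\mathcal{A}_n$ (for $n=1$ the single form $\Delta$ suffices). Once $12L-D$ is nef, everything in $\{a\geq 12b\geq 0\}$ is nef by convexity, and on the projective model its interior $\{a>12b>0\}$ is the ample cone by Kleiman's criterion.

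The step I expect to be hardest is precisely the nef-ness of $12L-D$ — the Arakelov inequality $D\cdot C\leq 12\,L\cdot C$ along curves running through the boundary — since this is the one place where genuine positivity input (Hodge theory of degenerations, or the theory of Siegel modular forms) is required rather than formal manipulation of a two-dimensional cone. A secondary nuisance is keeping track of the non-properness of $\mathcal{A}^*_n$, which forces some care about the meaning of ``nef'' and ``ample'' and about passing to a projective model at the right moment.
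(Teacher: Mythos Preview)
The paper does not prove this theorem: it is quoted as a result of Hulek--Sankaran~\cite{hs04} and used as a black box, with no argument given. There is therefore no proof in the paper to compare your proposal against.

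That said, your outline is the standard strategy and is close in spirit to how such results are established. The two test curves you describe --- a curve in a fibre of $D=\mathcal{X}_{n-1}/\pm 1\to\mathcal{A}_{n-1}$ and an embedded copy of $\mathcal{A}^*_1$ via products with a fixed $A'$ --- are exactly the right extremal rays of the Mori cone, and your identification of the nefness of $12L-D$ as the only substantive step is accurate. One cautionary remark: the cited paper \cite{hs04} treats the full perfect-cone toroidal compactification of $\mathcal{A}_4$, not merely the partial compactification $\mathcal{A}^*_n$; the statement here is the restriction of their result to the rank-two Picard group generated by $L$ and $D$, and your instinct to pass to a projective compactification whose additional boundary has codimension at least two (so that $\mathrm{Pic}_{\mathbb{Q}}$ is unchanged and Kleiman's criterion applies) is the correct way to make ``ample'' meaningful on the quasi-projective $\mathcal{A}^*_n$.
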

It follows that $H=aL-bD$ will be very ample on $\mathcal{A}^*_n$ for sufficiently large $a$ and $b$ satisfying $a>12b>0$.

For general polarizations, let $\pi :\mathcal{X}\rightarrow\mathcal{A}_{d_1,\ldots,d_n}$ be the universal abelian variety and let $\pi :\mathcal{X}^*\rightarrow\mathcal{A}^*_{d_1,\ldots,d_n}$ be its extension to the partial compactification. The Hodge line bundle can be defined on $\mathcal{A}_{d_1,\ldots,d_n}$ in the same way as before, i.e.,
$$L:=\pi_*\omega_{\mathcal{X}/\mathcal{A}_{d_1,\ldots,d_n}},$$
and extended to the partial compactification by
$$L:=\pi_*\omega_{\mathcal{X}^*/\mathcal{A}^*_{d_1,\ldots,d_n}}.$$
The boundary
$$D:=\mathcal{A}^*_{d_1,\ldots,d_n}\backslash\mathcal{A}_{d_1,\ldots,d_n}$$
will no longer be irreducible, so we write
$$D=\bigcup_{i=1}^mD_i$$
for the decomposition into irreducible components. Then $L$ and $D_1,\ldots,D_m$ generate the Picard group of $\mathcal{A}^*_{d_1,\ldots,d_n}$ over $\mathbb{Q}$. Moreover,
$$H=aL-b_1D_1-\ldots -b_mD_m$$
will be very ample for sufficiently large $a,b_1,\ldots,b_m$, with $a\gg b_i$. We will fix such a divisor $H$ for the remainder of the article.



We return now to the Lagrangian fibration $\pi:X\rightarrow\P^n$. Recall that we have a classifying map
$$\bar{\phi}:\P^n\backslash\Delta_0\rightarrow\mathcal{A}^*_{d_1,\ldots,d_n}.$$
We wish to describe the pullbacks of $L$ and $D$ by $\bar{\phi}$.

\begin{lemma}
The pullback to $\P^n\backslash\Delta_0$ of the Hodge line bundle is
$$\bar{\phi}^*L\cong\mathcal{O}(n+1)|_{\P^n\backslash\Delta_0}.$$
\end{lemma}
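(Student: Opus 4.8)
The plan is to identify $\bar\phi^*L$ with a line bundle on $\P^n\backslash\Delta_0$ and then extend the comparison across $\Delta_0$, using the fact that $\Delta_0$ has codimension at least two. Since $\mathrm{Pic}(\P^n)\cong\Z$ and $\Delta_0$ has codimension $\geq 2$, restriction gives an isomorphism $\mathrm{Pic}(\P^n)\xrightarrow{\sim}\mathrm{Pic}(\P^n\backslash\Delta_0)$; hence it suffices to compute $\bar\phi^*L$ as the restriction of some $\O(k)$ and pin down $k$. By the definition of the Hodge line bundle as $\pi_*\omega_{\X^*/\mathcal{A}^*}$ and its compatibility with upper canonical extensions, the pullback $\bar\phi^*L$ over $\P^n\backslash\Delta$ is simply $\pi_*\omega_{X/\P^n}$ restricted to $\P^n\backslash\Delta$, where here $\pi:X\to\P^n$ is the Lagrangian fibration; so the first task is to show that $\bar\phi^*L$ agrees, over all of $\P^n\backslash\Delta_0$, with the restriction of $\pi_*\omega_{X/\P^n}$ (a coherent sheaf on $\P^n$ whose reflexive hull we can then compare to $\O(k)$).

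The heart of the matter is therefore to compute $\pi_*\omega_{X/\P^n}$ for a Lagrangian fibration $\pi:X\to\P^n$ admitting a section. First I would invoke Matsushita's structure results: the relative dualizing sheaf $\omega_{X/\P^n}$ equals $\pi^*\omega_{\P^n}^{\vee}\otimes\omega_X=\pi^*\O(n+1)$, using $\omega_{\P^n}=\O(-n-1)$ and the triviality of $\omega_X=K_X$ (which follows since $X$ is holomorphic symplectic). By the projection formula, $\pi_*\omega_{X/\P^n}=\pi_*\pi^*\O(n+1)=\O(n+1)\otimes\pi_*\O_X=\O(n+1)$, since $\pi_*\O_X=\O_{\P^n}$ (the fibres are connected, $X$ and $\P^n$ are normal). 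This computes the line bundle on all of $\P^n$; restricting to $\P^n\backslash\Delta_0$ and matching with the extension property of $L$ gives $\bar\phi^*L\cong\O(n+1)|_{\P^n\backslash\Delta_0}$.

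The main obstacle I anticipate is the identification of $\bar\phi^*L$ with $\pi_*\omega_{X/\P^n}$ \emph{across the boundary} $\Delta$, not just over $\P^n\backslash\Delta$: over generic points of $\Delta$ the fibre of $X$ is a rank-one semi-stable degeneration, and one must check that $\pi_*\omega_{X/\P^n}$ near such a point really is the upper canonical extension of the Hodge bundle pulled back along $\bar\phi$, rather than some other extension. This is exactly the content of Kollár's Theorem~8.10.7 cited in the excerpt — the pushforward of the relative dualizing sheaf of a family with semi-stable (more precisely, with appropriate) degenerations computes the upper canonical extension of the bottom Hodge piece — so the argument reduces to verifying that $\pi:X\to\P^n$ near a generic point of $\Delta$ satisfies the hypotheses of that theorem, which it does by hypothesis~3. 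Once that compatibility is in hand, the codimension-two fact about $\Delta_0$ lets one discard $\Delta_0$ harmlessly, and the computation $\pi_*\omega_{X/\P^n}=\O(n+1)$ on $\P^n$ finishes the proof.
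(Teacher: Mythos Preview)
Your argument is correct, and the first step---identifying $\bar\phi^*L$ with $\pi_*\omega_{X/\P^n}$ over $\P^n\backslash\Delta_0$ via the upper canonical extension and hypothesis~3---is exactly what the paper does (the paper cites Koll\'ar's Theorem~8.10.8 for the compatibility of canonical extensions with pullback, noting that hypothesis~3 forces unipotent monodromy; you cite 8.10.7 and argue the compatibility slightly differently, but the content is the same).

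Where you diverge is in the computation of $\pi_*\omega_{X/\P^n}$. The paper goes through Grothendieck--Serre duality to write $\pi_*\omega_{X/\P^n}\cong(R^n\pi_*\mathcal{O}_X)^{\vee}$, then invokes Koll\'ar's result $R^n\pi_*\omega_X\cong\omega_{\P^n}$ (with $\omega_X$ trivial) to get $R^n\pi_*\mathcal{O}_X\cong\omega_{\P^n}$, and finally dualizes. Your route is shorter and more elementary: since $X$ and $\P^n$ are smooth, $\omega_{X/\P^n}=\omega_X\otimes\pi^*\omega_{\P^n}^{-1}\cong\pi^*\mathcal{O}(n+1)$, and the projection formula together with $\pi_*\mathcal{O}_X\cong\mathcal{O}_{\P^n}$ immediately gives $\pi_*\omega_{X/\P^n}\cong\mathcal{O}(n+1)$ on all of $\P^n$. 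This avoids both Grothendieck--Serre duality and the higher-direct-image theorem. The paper's approach has the mild advantage of connecting to Matsushita's more refined result $R^i\pi_*\mathcal{O}_X\cong\Omega^i_{\P^n}$ and to the canonical bundle formula remark that follows, but for the bare statement of the lemma your computation is cleaner.
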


\begin{proof}
Because the boundary divisor $D$ of $\mathcal{A}^*_{d_1,\ldots,d_n}$ parametrizes rank-one semi-stable degenerations of abelian varieties, the local system $R^n\pi_*\mathbb{C}_{\mathcal{X}/\mathcal{A}_{d_1,\ldots,d_n}}$ will have unipotent monodromy around every irreducible component of $D$. Similarly, hypothesis $3$ implies that the corresponding local system on $\P^n\backslash\Delta$ will have unipotent monodromy around every irreducible component of $\Delta$. By Theorem~8.10.8 of Koll{\'a}r~\cite{kollar07} the canonical extension of the bottom piece of the Hodge filtration commutes with pullbacks in this situation. In other words
$$\bar{\phi}^*L=\bar{\phi}^*\pi_*\omega_{\mathcal{X}^*/\mathcal{A}^*_{d_1,\ldots,d_n}}\cong\pi_*\omega_{X/(\P^n\backslash\Delta_0)}$$
(we are abusing notation again, by using $\pi$ to denote both the projections $X\rightarrow\P^n\backslash\Delta_0$ and $\mathcal{X}^*\rightarrow\mathcal{A}^*_{d_1,\ldots,d_n}$).

If $\pi:X\rightarrow\P^n$ were smooth everywhere, we would have by Grothendieck-Serre duality
\begin{eqnarray*}
R\pi_*\omega_{X/\P^n}[n] & \cong & R\pi_*\mathcal{H}om_X(\mathcal{O}_X,\omega_{X/\P^n}[n]) \\
& \cong & R\pi_*\mathcal{H}om_X(\mathcal{O}_X,\pi^!\mathcal{O}_{\P^n}) \\
& \cong & R\mathcal{H}om_{\P^n}(R\pi_*\mathcal{O}_X,\mathcal{O}_{\P^n}) \\
& \cong & (R\pi_*\mathcal{O}_X)^{\vee},
\end{eqnarray*}
and in particular $\pi_*\omega_{X/\P^n}\cong(R^n\pi_*\mathcal{O}_X)^{\vee}$. Of course $\pi:X\rightarrow\P^n$ is not smooth everywhere, but this isomorphism still holds over $\P^n\backslash\Delta$, i.e., away from the singular fibres, a priori. Moreover, since $\pi_*\omega_{X/(\P^n\backslash\Delta)}$ and $R^n\pi_*\mathcal{O}_X|_{\P^n\backslash\Delta}$ are the bottom and top pieces of the Hodge filtration on the local system $R^n\pi_*\mathbb{C}_{X/(\P^n\backslash\Delta)}$, they admit canonical extensions from $\P^n\backslash\Delta$ to $\P^n\backslash\Delta_0$, and the isomorphism extends to
$$\pi_*\omega_{X/(\P^n\backslash\Delta_0)}\cong(R^n\pi_*\mathcal{O}_X)^{\vee}|_{\P^n\backslash\Delta_0}.$$

Finally, Proposition~7.6 of Koll{\'a}r~\cite{kollar86i} states that $R^n\pi_*\omega_X\cong\omega_Y$ for a fibration $\pi:X\rightarrow Y$ of relative dimension $n$ between smooth projective varieties. In our case $X$ is a holomorphic symplectic manifold, so $\omega_X\cong\mathcal{O}_X$ and Koll{\'a}r's result becomes $R^n\pi_*\mathcal{O}_X\cong\omega_{\P^n}$ (in fact, Matsushita~\cite{matsushita05} proved that $R^i\pi_*\mathcal{O}_X\cong\Omega^i_{\P^n}$ for all $i$, though we do not need this stronger statement). Putting everything together we obtain
$$\bar{\phi}^*L\cong\pi_*\omega_{X/(\P^n\backslash\Delta_0)}\cong(R^n\pi_*\mathcal{O}_X)^{\vee}|_{\P^n\backslash\Delta_0}\cong(\omega_{\P^n})^{\vee}|_{\P^n\backslash\Delta_0}\cong\mathcal{O}(n+1)|_{\P^n\backslash\Delta_0}.$$
\end{proof}

\begin{remark}
There is an alternate approach to calculating $\bar{\phi}^*L$, which is to use the generalization of Kodaira's canonical bundle formula as proved by Kawamata~\cite{kawamata98}, and described by Koll{\'a}r in~\cite{kollar07}. This formula applies to a fibration $f:X\rightarrow Y$ between normal varieties with generic fibre $F$ which satisfies $p_g^+=1$, and it looks like
\begin{eqnarray}
\label{Kodaira}
K_X+R & \sim_{\mathbb{Q}} & f^*(K_Y+J(X/Y,R)+B_R).
\end{eqnarray}
The full statement is rather complicated, and takes about half a page to state (see Theorem~8.5.1 of~\cite{kollar07}), but the main points are
\begin{itemize}
\item the $\mathbb{Q}$-divisor $R$ is chosen so that $K_X+R$ is $\mathbb{Q}$-Cartier and rationally equivalent to the pullback of some $\mathbb{Q}$-Cartier divisor on $Y$,
\item the reduced divisor $B$ on $Y$ parametrizes ``bad'' singular fibres, in the sense that $f$ has slc (semilog canonical) fibres in codimension one over $Y\backslash B$,
\item these bad singular fibres then contribute to the formula through the $\mathbb{Q}$-divisor $B_R$, supported on $B$,
\item the term $J(X/Y,R)$ is the moduli part, describing the variation of the fibres in the family $f:X\rightarrow Y$.
\end{itemize}

Now for a Lagrangian fibration $\pi:X\rightarrow\P^n$, $K_X$ is trivial, so $R$ can be chosen to be trivial too. By our hypothesis $3$, the codimension one singular fibres of $\pi$ are rank-one semi-stable degenerations of abelian varieties, which are slc. So $B$ is empty, and then $B_R$ must also vanish. The canonical bundle formula therefore looks like
$$0=K_X\sim_{\mathbb{Q}}\pi^*(K_{\P^n}+J(X/\P^n)),$$
and we conclude that the moduli part $J(X/\P^n)\cong K_{\P^n}^{\vee}$. But this moduli part can be identified with the pullback of the Hodge line bundle (see Definition~8.4.6 and the base-change Proposition~8.4.9 in~\cite{kollar07}).
\end{remark}

Over each irreducible component of $\Delta$ the fibres of $X\rightarrow\P^n$ will conform to a particular combinatorial type of degeneration, corresponding to a particular irreducible component of the boundary divisor $D$ in $\mathcal{A}^*_{d_1,\ldots,d_n}$. Thus we can write
$$\Delta=\bigcup_{i=1}^m\bigcup_{j=1}^{s_i}\Delta_{ij}$$
for the decomposition of $\Delta$ into irreducible components, where $\bar{\phi}$ maps $\Delta_{ij}$ into $D_i$. Note that $s_i$ could be zero for some $i$, if $X\rightarrow\P^n$ contains no singular fibres of the type parametrized by $D_i$.

\begin{lemma}
The pullback to $\P^n\backslash\Delta_0$ of the irreducible component $D_i$ of the boundary divisor is
$$\bar{\phi}^*D_i=(k_{i1}\Delta_{i1}+\ldots+k_{is_i}\Delta_{is_i})|_{\P^n\backslash\Delta_0}$$
for some integers $k_{ij}\geq 1$.
\end{lemma}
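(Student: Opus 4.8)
The plan is to compute $\bar{\phi}^*D_i$ one prime divisor at a time. Since $\P^n\backslash\Delta_0$ is irreducible and its generic point, lying in $\P^n\backslash\Delta$, is sent by $\bar{\phi}$ into $\mathcal{A}_{d_1,\ldots,d_n}$, which is disjoint from $D_i$, the map $\bar{\phi}$ does not contract the source into $D_i$; hence $\bar{\phi}^*D_i$ is a well-defined effective divisor, which we may write as $\sum_V n_V V$ with $V$ ranging over prime divisors of $\P^n\backslash\Delta_0$ and $n_V\ge0$. It then suffices to prove that the only $V$ with $n_V>0$ are $\Delta_{i1},\ldots,\Delta_{is_i}$, and that the corresponding multiplicities are positive integers.

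For the support, suppose $n_V>0$, so that $\bar{\phi}(V)\subseteq D_i\subseteq D$. Since $\bar{\phi}$ maps all of $\P^n\backslash\Delta$ into $\mathcal{A}_{d_1,\ldots,d_n}$, which is disjoint from the boundary $D$, we get $V\subseteq\Delta$; being of codimension one, $V$ is therefore one of the irreducible components $\Delta_{i'j'}$ of $\Delta$. By the construction of the decomposition $\Delta=\bigcup_{i',j'}\Delta_{i'j'}$, the classifying map sends $\Delta_{i'j'}$ into $D_{i'}$, so $\bar{\phi}(\Delta_{i'j'})\subseteq D_i\cap D_{i'}$. At this point I would invoke the one genuinely geometric input: every point of the boundary $D$ of $\mathcal{A}^*_{d_1,\ldots,d_n}$ parametrizes a rank-one semi-stable degeneration with a single well-defined combinatorial type, so the irreducible components $D_1,\ldots,D_m$ of $D$ are pairwise disjoint. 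Hence $D_i\cap D_{i'}\ne\emptyset$ forces $i'=i$, and the support of $\bar{\phi}^*D_i$ is contained in $(\Delta_{i1}\cup\cdots\cup\Delta_{is_i})\cap(\P^n\backslash\Delta_0)$.

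For the multiplicities, fix a general point $t\in\Delta_{ij}$ and restrict the family $X\to\P^n$ to a small disk $T\subset\P^n$ meeting $\Delta_{ij}$ transversally at $t$ and avoiding the other components of $\Delta$; the composite $T\to\mathcal{A}^*_{d_1,\ldots,d_n}$ then lands in a chart in which $D_i$ is cut out by a single holomorphic coordinate $q$ (an exponential of a period), and $k_{ij}:=n_{\Delta_{ij}}$ is the order of vanishing of $q$ at $t$. Since the central fibre $X_t$ is a rank-one semi-stable degeneration lying in $D_i$, the coordinate $q$ vanishes at $t$, so $k_{ij}$ is a positive integer. (Note that $\Delta_{ij}$, being a divisor while $\Delta_0$ has codimension at least two, really does survive as a prime divisor on $\P^n\backslash\Delta_0$.) Combining the two points gives $\bar{\phi}^*D_i=(k_{i1}\Delta_{i1}+\cdots+k_{is_i}\Delta_{is_i})|_{\P^n\backslash\Delta_0}$ with every $k_{ij}\ge1$, as claimed. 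The step demanding real care is the pairwise disjointness of the boundary components $D_i$ --- equivalently, the fact, built into the definitions, that each component of $\Delta$ is mapped into exactly one $D_i$; granted that, the rest is routine bookkeeping with orders of vanishing.
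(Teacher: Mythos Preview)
Your argument is correct and follows the same idea as the paper's proof, which is essentially the single sentence ``this follows immediately from the fact that $D_i$ and the $\Delta_{ij}$ both parametrize degenerate abelian varieties of the same combinatorial type.'' You have simply unpacked this: the support computation and the positivity of the $k_{ij}$ are exactly what that sentence encodes, and your identification of the one nontrivial input --- that each component of $\Delta$ maps into a unique $D_i$, equivalently that distinct $D_i$ parametrize distinct combinatorial types --- is precisely the content the paper is invoking.
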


\begin{proof}
This follows immediately from the fact that $D_i$ and the $\Delta_{ij}$ both parametrize degenerate abelian varieties of the same combinatorial type. Note that $k_{ij}$ could be strictly greater than $1$ if $\bar{\phi}$ is ramified along $\Delta_{ij}$.
\end{proof}

\begin{remark}
In fact, there should not be ramification along any $\Delta_{ij}$, as ramification would imply that a local base change is possible, leading to stable reduction of the singular fibres over $\Delta_{ij}$ (see Hwang and Oguiso~\cite{ho11}). However, the singular fibres over $\Delta_{ij}$ are already semi-stable by hypothesis $3$, and therefore no stable reduction should be needed.
\end{remark}


Combining the previous two lemmas we reach the following conclusion.

\begin{lemma}
\label{bound}
The pullback $\bar{\phi}^*H$ of the very ample bundle $H$ on $\mathcal{A}^*_{d_1,\ldots,d_n}$ has bounded degree, where we measure the degree by intersecting $\bar{\phi}^*H$ with a generic line in $\P^n$ (which will of course avoid the codimension two subset $\Delta_0$).
\end{lemma}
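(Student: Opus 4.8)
The plan is to express $\bar{\phi}^*H$ explicitly in terms of divisors on $\P^n$ using the two preceding lemmas, and then bound the degree of each contributing term. Recall that $H = aL - b_1D_1 - \ldots - b_mD_m$ on $\mathcal{A}^*_{d_1,\ldots,d_n}$. Pulling back by $\bar{\phi}$ and using the first lemma ($\bar{\phi}^*L \cong \mathcal{O}(n+1)$) together with the second lemma ($\bar{\phi}^*D_i = \sum_{j=1}^{s_i} k_{ij}\Delta_{ij}$, with $k_{ij}\geq 1$), we obtain on $\P^n\backslash\Delta_0$
$$\bar{\phi}^*H \cong \mathcal{O}(a(n+1)) - \sum_{i=1}^m\sum_{j=1}^{s_i} b_i k_{ij}\Delta_{ij}.$$
Since $\Delta_0$ is codimension two, line bundles and divisor classes on $\P^n\backslash\Delta_0$ extend uniquely to $\P^n$, so this identity can be read as an identity of divisor classes on all of $\P^n$, with each $\Delta_{ij}$ a genuine hypersurface of some degree $e_{ij}\geq 1$. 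Intersecting with a generic line $\ell \subset \P^n$ (which avoids $\Delta_0$) gives
$$\deg(\bar{\phi}^*H) = a(n+1) - \sum_{i=1}^m\sum_{j=1}^{s_i} b_i k_{ij} e_{ij}.$$

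The key point is then that this quantity is squeezed between two explicit bounds. For the upper bound: since $H$ is ample (indeed very ample) on $\mathcal{A}^*_{d_1,\ldots,d_n}$ and $\bar{\phi}$ is non-constant — which follows from hypothesis 4, since $\phi$ is an immersion and hence certainly not constant — the pullback $\bar{\phi}^*H$ restricted to $\ell$ has positive degree; and from the displayed formula it is bounded above by $a(n+1)$, because every term $b_i k_{ij} e_{ij}$ being subtracted is non-negative (indeed each factor is a positive integer, or the term is absent when $s_i = 0$). For the lower bound: the coefficients $b_i$ and $k_{ij}$ and degrees $e_{ij}$ are all positive integers, so $\deg(\bar{\phi}^*H) \geq 1$; combined with the previous sentence this also forces $\sum b_i k_{ij} e_{ij} \leq a(n+1) - 1$, so in particular each $\Delta_{ij}$ has degree at most $a(n+1)$, each multiplicity $k_{ij}$ is at most $a(n+1)$, and the number of components $\sum_i s_i$ is bounded. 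Thus
$$1 \leq \deg(\bar{\phi}^*H) \leq a(n+1),$$
with $a$ and $n$ fixed once and for all, so the degree lies in a finite set.

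I expect the main (and essentially only) subtlety to be making rigorous the passage from $\P^n\backslash\Delta_0$ to $\P^n$: one must note that $\Delta_0$ has codimension at least two so that $\mathrm{Pic}(\P^n\backslash\Delta_0) \cong \mathrm{Pic}(\P^n) \cong \Z$, hence the line bundle $\bar{\phi}^*H$, a priori only defined on $\P^n\backslash\Delta_0$, corresponds to a well-defined integer (its degree), and the divisor-class identity above makes sense integrally on $\P^n$. Everything else is bookkeeping: the identity for $\bar{\phi}^*H$ is immediate from the two lemmas, positivity of $a(n+1) - \deg(\bar{\phi}^*H)$ comes from ampleness of $H$ and non-constancy of $\bar{\phi}$ (hypothesis 4), and non-negativity of the subtracted terms is built into the integrality statements $k_{ij}\geq 1$, $b_i > 0$, $e_{ij}\geq 1$. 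I would close by recording explicitly the bounds we will use later, namely that $a(n+1)$ bounds simultaneously the degree of $\bar\phi^*H$ on a line, the degrees $e_{ij}$ of the components $\Delta_{ij}$, the ramification multiplicities $k_{ij}$, and (with the $b_i$ bounded below by $1$) the total number of boundary components meeting the image of $\bar\phi$.
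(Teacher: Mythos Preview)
Your proposal is correct and follows essentially the same approach as the paper: pull back $H=aL-\sum b_iD_i$ using the two preceding lemmas to get $\bar{\phi}^*H\cong\mathcal{O}\bigl(a(n+1)-\sum_{i,j}b_ik_{ij}\deg\Delta_{ij}\bigr)$ on $\P^n\backslash\Delta_0$, then observe that the subtracted terms are non-negative so the degree is at most $a(n+1)$. The paper's own proof stops there; your additional material on the lower bound (positivity from ampleness of $H$ and hypothesis~4) and the consequent bounds on the $\deg\Delta_{ij}$ anticipates the subsequent corollary, and your remark on extending divisor classes across the codimension-two locus $\Delta_0$ makes explicit a point the paper leaves implicit.
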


\begin{proof}
Remember that we have fixed sufficiently large integers $a$ and $b_1,\ldots,b_m$ so that $H=aL-b_1D_1-\ldots -b_mD_m$ is very ample. Then
\begin{eqnarray*}
\bar{\phi}^*H & = & a\bar{\phi}^*L-\sum_{i=1}^m b_i\bar{\phi}^*D_i \\
 & \cong & \left.\mathcal{O}\left(a(n+1)-\sum_{i=1}^m\sum_{j=1}^{s_i}b_ik_{ij}\mathrm{deg}\Delta_{ij}\right)\right|_{\P^n\backslash\Delta_0}
 \end{eqnarray*}
has degree bounded above by $a(n+1)$.
\end{proof}

\begin{corollary}
For a Lagrangian fibration by principally polarized abelian varieties the degree of the discriminant locus is bounded
$$\mathrm{deg}\Delta\leq 12(n+1).$$
\end{corollary}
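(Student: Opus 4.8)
The plan is to run the argument of Lemma~\ref{bound} with the \emph{extremal} nef divisor $12L-D$ in place of a very ample $H$. In the principally polarized case $d_1=\cdots=d_n=1$, so there is a single boundary divisor $D=\mathcal{A}^*_n\backslash\mathcal{A}_n$ (that is, $m=1$), and the classifying map is a morphism $\bar{\phi}:\P^n\backslash\Delta_0\rightarrow\mathcal{A}^*_n$ with $\Delta_0$ of codimension at least two in $\P^n$. Write $\Delta=\Delta_1\cup\cdots\cup\Delta_s$ for the decomposition of the discriminant into irreducible components. By the first of the two lemmas preceding the corollary, $\bar{\phi}^*L\cong\mathcal{O}(n+1)|_{\P^n\backslash\Delta_0}$, and by the second, $\bar{\phi}^*D=(k_1\Delta_1+\cdots+k_s\Delta_s)|_{\P^n\backslash\Delta_0}$ with each $k_j\geq 1$. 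Since $\mathcal{O}(\Delta_j)\cong\mathcal{O}(\mathrm{deg}\,\Delta_j)$ on $\P^n$, this gives
$$\bar{\phi}^*(12L-D)\cong\mathcal{O}\!\left(12(n+1)-\sum_{j=1}^s k_j\,\mathrm{deg}\,\Delta_j\right)\Big|_{\P^n\backslash\Delta_0}.$$

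Next I would restrict to a generic line. Because $\Delta_0$ has codimension at least two, a generic line $\ell\cong\P^1$ in $\P^n$ misses $\Delta_0$, so $\bar{\phi}|_\ell:\ell\rightarrow\mathcal{A}^*_n$ is a genuine morphism from a complete curve, and $\ell$ meets each $\Delta_j$ in $\mathrm{deg}\,\Delta_j$ points. Restricting the displayed identity to $\ell$ exhibits a line bundle of degree $12(n+1)-\sum_j k_j\,\mathrm{deg}\,\Delta_j$ on $\P^1$. On the other hand, $12L-D$ is the case $a=12$, $b=1$ of the Hulek--Sankaran theorem, hence lies on the boundary of the nef cone of $\mathcal{A}^*_n$; its pullback along any morphism from a complete curve therefore has non-negative degree, $\deg\big((\bar{\phi}|_\ell)^*(12L-D)\big)=(12L-D)\cdot(\bar{\phi}|_\ell)_*[\ell]\geq 0$, since $(\bar{\phi}|_\ell)_*[\ell]$ is represented by the complete curve $\bar{\phi}(\ell)\subset\mathcal{A}^*_n$. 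Comparing the two evaluations of this degree yields $\sum_{j=1}^s k_j\,\mathrm{deg}\,\Delta_j\leq 12(n+1)$, and since every $k_j\geq 1$ we conclude $\mathrm{deg}\,\Delta=\sum_{j=1}^s\mathrm{deg}\,\Delta_j\leq 12(n+1)$.

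I expect the only point that needs care is the claim that nefness of $12L-D$ on the quasi-projective (and stacky) space $\mathcal{A}^*_n$ forces non-negativity of its degree on the complete curve $\bar{\phi}(\ell)$; this is immediate from the definition of nef, provided one notes that $\bar{\phi}(\ell)$ is complete (being the image of $\P^1$) and works consistently with $\mathbb{Q}$-divisor classes on the stack, as the Picard group is only rationally generated by $L$ and $D$. Everything else is a direct substitution into the two lemmas already proved.
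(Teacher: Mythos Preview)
Your proof is correct and follows essentially the same route as the paper's: both pull back $aL-bD$ via the two preceding lemmas to obtain $\sum_j k_j\,\mathrm{deg}\,\Delta_j\leq\frac{a}{b}(n+1)$, and then use $k_j\geq 1$. The only difference is that you work directly with the extremal nef class $12L-D$ and invoke non-negativity on the complete curve $\bar\phi(\ell)$, whereas the paper takes $H=aL-bD$ strictly ample with $a>12b$ (using the immersion hypothesis to get $\mathrm{deg}\,\bar\phi^*H>0$) and then lets $a/b\to 12$; your version is marginally cleaner and incidentally does not need hypothesis~4.
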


\begin{proof}
Since $H$ is very ample and $\bar{\phi}$ is generically an immersion, we must have
$$\mathrm{deg}(\bar{\phi}^*H)=a(n+1)-\sum_{i=1}^m\sum_{j=1}^{s_i}b_ik_{ij}\mathrm{deg}\Delta_{ij}>0.$$
In the principally polarized case the boundary divisor $D$ is irreducible, so $m=1$, and dropping $i$ from the notation we have
$$a(n+1)-b\sum_{j=1}^sk_j\mathrm{deg}\Delta_j>0.$$
Therefore
$$\mathrm{deg}\Delta=\sum_{j=1}^s\mathrm{deg}\Delta_j\leq\sum_{j=1}^sk_j\mathrm{deg}\Delta_j<\frac{a(n+1)}{b}.$$
Since this is true for all sufficiently large $a$ and $b$ satisfying $a>12b>0$, the lemma follows.
\end{proof}

\begin{remark}
Elliptic K3 surfaces can have up to 24 singular fibres, so this bound is sharp when $n=1$. However, it is not sharp when $n=2$, as the author proved in~\cite{sawon08i} that $\mathrm{deg}\Delta\leq 30$ for Lagrangian fibrations by abelian surfaces (assuming the generic singular fibres are semi-stable, but allowing arbitrary polarizations of the fibres). In higher dimensions, $\mathrm{deg}\Delta=6(n+3)$ for the Beauville-Mukai integrable system, and $\mathrm{deg}\Delta=6(n+1)$ for the Lagrangian fibration on the generalized Kummer variety (whose fibres are not principally polarized).
\end{remark}

\section{Finiteness}

To each Lagrangian fibration we have associated a classifying map. In this section we show that this association is one-to-one, i.e., two Lagrangian fibrations with the same classifying map must be isomorphic. Then we give a precise description of the space of all classifying maps. This will ultimately lead to a proof of Theorem~\ref{main}.

\subsection{Injectivity}

\begin{proposition}
\label{injective}
The association that assigns the classifying map 
$$\bar{\phi}:\P^n\backslash\Delta_0\rightarrow\mathcal{A}^*_{d_1,\ldots,d_n}$$
to each Lagrangian fibration $\pi:X\rightarrow\P^n$ satisfying hypotheses 1 to 4 of Theorem~\ref{main} is one-to-one.
\end{proposition}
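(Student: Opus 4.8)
The plan is to reconstruct the Lagrangian fibration $\pi:X\rightarrow\P^n$ from its classifying map $\bar\phi$ and thereby show the construction is inverse to the one in Section~2. Starting from $\bar\phi:\P^n\backslash\Delta_0\rightarrow\mathcal{A}^*_{d_1,\ldots,d_n}$, first pull back the universal family. Over $\P^n\backslash\Delta$ the map $\phi$ lands in $\mathcal{A}_{d_1,\ldots,d_n}$, so $\phi^*\mathcal{X}$ is a family of polarized abelian varieties; by hypothesis~1 the original fibration has a section, which provides the identity section of this family, and by construction of the classifying map this pulled-back family is canonically identified with $X|_{\P^n\backslash\Delta}$ as a fibration with section and fibrewise polarization. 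So over $\P^n\backslash\Delta$ the fibration is determined. Next, $\bar\phi$ extends this over $\Delta\backslash\Delta_0$ into the boundary $D$, and by hypothesis~3 and the very definition of the partial compactification $\mathcal{A}^*_{d_1,\ldots,d_n}$, the fibres of $\bar\phi^*\mathcal{X}^*$ over a generic point of each $\Delta_{ij}$ are the prescribed rank-one semi-stable degenerations, matching the fibres of $X$ there. Hence $X$ and $\bar\phi^*\mathcal{X}^*$ agree over the open set $\P^n\backslash\Delta_0$, which is the complement of a codimension-two subset.

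The second step is to promote this agreement over $\P^n\backslash\Delta_0$ to a global isomorphism $X\cong X'$ between two Lagrangian fibrations with the same classifying map. Both $X$ and $X'$ are irreducible holomorphic symplectic manifolds, hence smooth and in particular normal; moreover $\pi$ and $\pi'$ are proper. The preimages $\pi^{-1}(\Delta_0)$ and $(\pi')^{-1}(\Delta_0)$ have codimension at least two in $X$ and $X'$ respectively (since $\Delta_0$ has codimension $\geq 2$ in $\P^n$ and the fibres are equidimensional by Matsushita). We are given a biregular isomorphism $X\backslash\pi^{-1}(\Delta_0)\xrightarrow{\sim} X'\backslash(\pi')^{-1}(\Delta_0)$ commuting with the projections to $\P^n\backslash\Delta_0$. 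I would extend it using a Hartogs-type argument: choose a very ample line bundle on $X'$ (hypothesis~2 furnishes one), pull it back, and use that sections of line bundles — indeed, rational maps to projective space — extend across codimension-two subsets of a normal variety, so the isomorphism extends to a morphism $X\rightarrow X'$, and symmetrically $X'\rightarrow X$; the two extensions are mutually inverse where both are defined, hence everywhere by density, giving $X\cong X'$ over $\P^n$. Finally, the holomorphic symplectic form is unique up to scale on an irreducible holomorphic symplectic manifold, so the isomorphism is automatically compatible with the symplectic structures, and it carries the polarization and section to those of $X'$; thus the two Lagrangian fibrations are isomorphic as Lagrangian fibrations.

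The main obstacle, and the point requiring genuine care rather than formal nonsense, is the extension step across $\Delta_0$: one must be sure that the codimension-two locus $\pi^{-1}(\Delta_0)\subset X$ really has codimension at least two (not just its image), so that Hartogs applies, and that the partially defined isomorphism — which a priori is only an isomorphism of the \emph{pulled-back abstract families} — genuinely globalizes to an isomorphism of the total spaces $X$ respecting all the structure. Here the hypothesis that $X$ is smooth and the fibres are all Lagrangian (hence the fibration is equidimensional, by Matsushita~\cite{matsushita99}) is what makes $\pi^{-1}(\Delta_0)$ of the right codimension. I would also remark that the section in hypothesis~1 is essential for rigidity: without it the classifying map only records the family of tori up to translation, and the reconstruction could fail to be unique; with the section, the family with its origin is determined, and then $X$ itself is determined over $\P^n\backslash\Delta_0$ and extends uniquely by normality.
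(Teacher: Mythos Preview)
There is a genuine gap in the first step. You assert that pulling back the universal family $\mathcal{X}^*$ by $\bar\phi$ recovers $X|_{\P^n\backslash\Delta_0}$ canonically, but $\mathcal{A}_{d_1,\ldots,d_n}$ is \emph{not} a fine moduli space: every abelian variety has the automorphism $-1$, so the coarse space does not carry a universal family in the required sense, and a morphism to it does not determine the family up to isomorphism. Concretely, the classifying map only determines the monodromy representation
\[
r:\pi_1(\P^n\backslash\Delta)\longrightarrow \mathrm{PSp}(2n,\Z)
\]
(or its analogue for non-principal polarizations), whereas the family itself is governed by a lift $R$ of $r$ to $\mathrm{Sp}(2n,\Z)$, the \emph{homological invariant}. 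Different lifts give genuinely different families with the same $\bar\phi$. The paper's proof confronts this directly: using the stable-reduction theory of Hwang and Oguiso, it shows that among all lifts of $r$ only one produces semi-stable generic singular fibres (any other lift multiplies the local monodromy by a root of unity and forces an unstable fibre). Thus hypothesis~3 is what pins down $R$ from $\bar\phi$, and this is the substantive content you are missing. Your remark that the section in hypothesis~1 provides the needed rigidity is not enough: the section eliminates translation ambiguity in each fibre, but not the global $\pm 1$ ambiguity in the monodromy.

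Your extension step across $\Delta_0$ is in the right spirit but is stated too loosely. The claim that ``rational maps to projective space extend across codimension-two subsets of a normal variety'' is false as stated (think of $(x,y)\mapsto [x:y]$ on $\mathbb{A}^2\setminus\{0\}$): Hartogs extends the sections, but they may acquire a common zero on $\Delta_0$, leaving a base locus. What one actually needs is the Matsusaka--Mumford theorem (in its relative form): a birational map between smooth projective varieties that is an isomorphism outside codimension two and carries a (relatively) ample divisor to a (relatively) ample divisor is an isomorphism. The paper invokes exactly this; your argument would be repaired by citing it rather than the bare Hartogs principle.
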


\begin{remark}
Strictly speaking, we should consider Lagrangian fibrations up to isomorphism and classifying maps up to the natural action of $\mathrm{PGL}(n+1,\mathbb{C})$ on the space of such maps. We will leave these identifications implicit.
\end{remark}

\begin{proof}
For the sake of analogy, we begin with a related but simpler problem: reconstructing an elliptic fibration $Y\rightarrow B$ from its functional invariant or $j$-function. The following theory was developed by Kodaira; see Section~V.11.\ of Barth, Peters, and Van de Ven~\cite{bpv84}. As with our Lagrangian fibrations, we impose several hypotheses on the elliptic fibration: namely, it admits a global section and the generic singular fibres are stable (i.e., of type $I_b$ in Kodaira's classification). Let us again denote the discriminant locus by $\Delta\subset B$ and the codimension two subset of non-generic singular fibres by $\Delta_0\subset B$. Associated to an elliptic fibration is its functional invariant
$$J:B\backslash\Delta_0\rightarrow\C\cup\{\infty\}\cong\overline{\mathcal{M}}_{1,1}$$
where $\overline{\mathcal{M}}_{1,1}$ is the compactification of the moduli space of elliptic curves, i.e., genus one curves with one marked point, and its homological invariant, a representation
$$R:\pi_1(B\backslash\Delta)\rightarrow\mathrm{SL}(2,\mathbb{Z})$$
which describes the monodromy as we pass around $\Delta$. Now composing $R$ with the quotient
$$\mathrm{SL}(2,\mathbb{Z})\rightarrow\mathrm{SL}(2,\mathbb{Z})/\{\pm 1\}=\mathrm{PGL}(2,\mathbb{Z})$$
gives a representation
$$r:\pi_1(B\backslash\Delta)\rightarrow\mathrm{PGL}(2,\mathbb{Z})$$
which is already uniquely determined by the functional invariant. Thus a homological invariant belonging to $J$ is really just a lift of $r$ to $R$.

First consider elliptic surfaces, so that $\mathrm{dim}B=1$, $\Delta$ is a collection of points, and $\Delta_0$ is empty. In general, an elliptic surface $Y\rightarrow B$ which admits a global section (in particular, without multiple fibres) and with possibly non-stable singular fibres is uniquely determined by $J$ and $R$ (see Theorem~11.1 of~\cite{bpv84}). Now up to conjugation, the monodromies around fibres of type $I_b$ and $I^*_b$ are
$$\left(\begin{array}{cc} 1 & b \\ 0 & 1 \end{array}\right)\qquad\mbox{and}\qquad -\left(\begin{array}{cc} 1 & b \\ 0 & 1 \end{array}\right)$$
respectively. So if there is a lift of $r$ to $R$ such that all singular fibres of the corresponding elliptic surface are stable, i.e., of type $I_b$, then any other lift of $r$ to $R$ would lead to singular fibres of type $I^*_b$, i.e., unstable. In other words, by imposing the hypothesis that the singular fibres are stable, we ensure that the elliptic surface is uniquely determined by its functional invariant $J$ alone.

The same arguments applied to higher dimensional elliptic fibrations, with $\mathrm{dim}B>1$, enable us to uniquely determine the fibration over $B\backslash\Delta_0$ from $J$ and $R$ for fibrations admitting global sections, and from $J$ alone for fibrations whose generic singular fibres are stable. To conclude, we need to extend over the codimension two subset $\Delta_0$. This follows from a result of Matsusaka and Mumford~\cite{mm64} which states that a birational map between smooth projective varieties which restricts to an isomorphism on the complements of codimension two subsets, and which takes an ample divisor to an ample divisor, must in fact be an isomorphism (see Exercise~5.6 of Koll{\'a}r, Smith, and Corti~\cite{ksc04}). There is also a relative version of this result, with ample divisors replaced by relatively ample divisors (see Exercise~75 of Koll{\'a}r~\cite{kollar10}). In our case, given two elliptic fibrations $Y\rightarrow B$ and $Y^{\prime}\rightarrow B$ with the same functional invariant (and which admit global sections and whose generic singular fibres are stable) we first conclude that they are isomorphic over $B\backslash\Delta_0$ and then apply the relative version of Matsusaka and Mumford's result to conclude $Y\cong Y^{\prime}$.

Returning to Lagrangian fibrations, first note that the classifying map $\bar{\phi}$ takes the place of the functional invariant $J$. In the principally polarized case, the classifying map will determine a representation
$$r:\pi_1(\P^n\backslash\Delta)\rightarrow\mathrm{PSp}(2n,\mathbb{Z})$$
and the homological invariant will be a lift
$$R:\pi_1(\P^n\backslash\Delta)\rightarrow\mathrm{Sp}(2n,\mathbb{Z})$$
representing the monodromy around $\Delta$. For general polarizations, $\mathrm{PSp}(2n,\mathbb{Z})$ and $\mathrm{Sp}(2n,\mathbb{Z})$ should be replaced by the appropriate discrete symplectic groups of type $(d_1,\ldots,d_n)$. Suppose the monodromy around a semi-stable singular fibre is given by the matrix $T$. If we choose a different lift of $r$ to $R$, the monodromy will change to $\zeta T$, where $\zeta$ is some $m^{\mathrm{th}}$ root of unity. By the stable reduction theory developed by Hwang and Oguiso~\cite{ho11}, the monodromy $\zeta T$ will correspond to an unstable singular fibre, as a base change of order $m$ will be necessary to produce a fibration with a semi-stable singular fibre (note that their terminology ``stable'' is equivalent to our ``semi-stable''). We conclude that, as with elliptic fibrations, if there is a lift of $r$ to $R$ such that all generic singular fibres of the corresponding Lagrangian fibration are semi-stable, then any other lift of $r$ to $R$ would lead to unstable singular fibres. Therefore a Lagrangian fibration whose generic singular fibres are semi-stable is uniquely determined over $\P^n\backslash\Delta_0$ by its classifying map $\bar{\phi}$ alone. Note that in the principally polarized case, Hwang and Oguiso~\cite{ho10} actually give an explicit construction of the Lagrangian fibration from $\bar{\phi}$, which they call the ``period map'', but for our argument their stable reduction theory~\cite{ho11}, which applies to all polarization types, is sufficient.

Finally, we can extend our result over $\Delta_0$ by applying Matsusaka and Mumford's result as before. This concludes the proof.
\end{proof}

\begin{remark}
Note that for an arbitrary functional invariant $J:B\backslash\Delta_0\rightarrow\C\cup\{\infty\}$ there may be no lift of $r$ to $R$ whose corresponding elliptic fibration has generic fibres that are all stable. Moreover, if such a lift does exist, it may not be possible to complete the elliptic fibration over $B\backslash\Delta_0$ to a fibration over all of $B$. Similar comments apply to the classifying map $\bar{\phi}$. In the next subsection, we will identify conditions on $\bar{\phi}$ that ensure that a complete Lagrangian fibration $X\rightarrow\P^n$ whose generic fibres are semi-stable can be recovered from $\bar{\phi}$.
\end{remark}

\subsection{The space of classifying maps}

Recall that we have fixed $a$ and $b_1,\ldots,b_m$ such that $H=aL-b_1D_1-\ldots-b_mD_m$ is a very ample divisor on $\mathcal{A}^*_{d_1,\ldots,d_n}$. We therefore have an embedding
$$\mathcal{A}^*_{d_1,\ldots,d_n}\hookrightarrow\P^N$$
such that $H$ is a hyperplane section of $\mathcal{A}^*_{d_1,\ldots,d_n}$. The composition of $\bar{\phi}$ with this embedding is a map
$$\psi:\P^n\backslash\Delta_0\rightarrow\P^N,$$
which is given by $N+1$ homogeneous polynomials $\psi_i(z_0,\ldots,z_n)$, all of the same degree
$$k:=\mathrm{deg}(\psi^*\mathcal{O}_{\P^N}(1))=\mathrm{deg}(\bar{\phi}^*H).$$
In other words,
$$\psi=(\psi_0,\ldots,\psi_N)\in V_k:=\mathbb{C}[z_0,\ldots,z_n]_k^{\oplus (N+1)}.$$
In this section, by {\em classifying map\/} we shall mean $\psi$ rather than $\bar{\phi}$.

By Lemma~\ref{bound}, $k$ is bounded by $a(n+1)$, so the space of all classifying maps lies in $\bigsqcup_{k=1}^{a(n+1)}V_k$. However, not every map in $\bigsqcup_{k=1}^{a(n+1)}V_k$ will arise as a classifying map, so we need to describe the subset of classifying maps. It is enough to focus on a particular degree; henceforth we assume that $k$ is fixed and we drop it from our notation, writing simply $V$ for $V_k$. We will describe a sequence of conditions on a map $\psi\in V$ that are both necessary and sufficient for $\psi$ to be the classifying map of a Lagrangian fibration. At each step, we will prove that the added condition is either open, closed, or locally closed (more precisely, in this last case the subset of maps satisfying the condition will be a finite union of locally closed subsets). Ultimately, we will find that the subset $V^{(7)}\subset V$ of classifying maps is a constructible algebraic set, and therefore it consists of finitely many connected components.

Firstly, as a rational map from $\P^n$ to $\P^N$, the classifying map should be defined up to codimension two.

\begin{definition}
Define $V^{(1)}\subset V$ to be the subset of maps
$$\psi=(\psi_0,\ldots,\psi_N)\in V=\mathbb{C}[z_0,\ldots,z_n]_k^{\oplus (N+1)}$$
such that $\psi_i$ do not simultaneously vanish on any codimension one subvariety of $\P^n$, and which therefore define a rational map $\psi:\P^n\dashrightarrow\P^N$ that is regular on the complement of a codimension two subset $\Delta_0\subset\P^n$. (Note that the codimension two subset $\Delta_0$ is not fixed, but depends on $\psi$; see the remark following Lemma~\ref{3in2}.)
\end{definition}

\begin{lemma}
The subset $V^{(1)}$ is (Zariski) open in $V$.
\end{lemma}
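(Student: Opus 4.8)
*The subset $V^{(1)}$ is (Zariski) open in $V$.*

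The plan is to exhibit $V \setminus V^{(1)}$ as a constructible set of a form that is visibly closed, using the standard incidence-variety / elimination argument. Recall that $V = \mathbb{C}[z_0,\ldots,z_n]_k^{\oplus(N+1)}$ and that $\psi = (\psi_0,\ldots,\psi_N) \in V$ fails to lie in $V^{(1)}$ precisely when the $\psi_i$ have a common zero locus of codimension one in $\P^n$, equivalently when $\gcd(\psi_0,\ldots,\psi_N)$ is a non-constant polynomial. So the goal is to show $\{\psi : \gcd(\psi_0,\ldots,\psi_N) \neq 1\}$ is Zariski closed in $V$.

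First I would set up the incidence variety $\Sigma \subseteq \P^n \times V$ consisting of pairs $(p,\psi)$ with $\psi_i(p) = 0$ for all $i$. This is closed: it is cut out by the bihomogeneous equations $\psi_i(p) = 0$, which are polynomial in the coefficients of the $\psi_i$ and in the homogeneous coordinates of $p$. Let $\mathrm{pr}: \Sigma \to V$ be the second projection. Because $\P^n$ is complete (proper over $\mathbb{C}$), the map $\mathrm{pr}$ is proper, hence closed; in particular its image, the set $W$ of maps having at least one common zero, is Zariski closed in $V$. This already shows that the weaker condition "the base locus is nonempty" is a closed condition — but $V^{(1)}$ asks for the base locus to have codimension $\geq 2$, not to be empty, so a further argument is needed.

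The key step is therefore to analyze the fibre dimension of $\mathrm{pr}$. For $\psi \in W$, the fibre $\mathrm{pr}^{-1}(\psi)$ is (the projectivization of) the common zero locus $B(\psi) \subseteq \P^n$ of the $\psi_i$, and $\psi \notin V^{(1)}$ iff $\dim B(\psi) \geq n-1$. By upper semicontinuity of fibre dimension for the proper (closed) morphism $\mathrm{pr}$ — that is, the set $\{q \in \Sigma : \dim_q \mathrm{pr}^{-1}(\mathrm{pr}(q)) \geq n-1\}$ is closed in $\Sigma$ (Chevalley's semicontinuity theorem / EGA IV 13.1.3) — its image under the closed map $\mathrm{pr}$ is closed in $V$. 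But that image is exactly $V \setminus V^{(1)} = \{\psi : \dim B(\psi) \geq n-1\}$. Hence $V \setminus V^{(1)}$ is closed and $V^{(1)}$ is open. (An equivalent, more hands-on route: $\gcd(\psi_0,\ldots,\psi_N)$ is non-constant iff $\psi_i$ all vanish on some hypersurface, iff there is a form $g$ of degree $1 \leq e \leq k$ dividing every $\psi_i$; for each fixed $e$ the condition "$\exists g \in \mathbb{C}[z]_e$, $g \mid \psi_i \ \forall i$" is the image of a projective bundle under a proper map to $V$, hence closed, and one takes the finite union over $e = 1,\ldots,k$.)

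The main obstacle is the second step: the first paragraph's properness argument only handles "base locus nonempty," and one must genuinely invoke semicontinuity of fibre dimension (or, in the alternative approach, carefully check that "$\psi_i$ all divisible by a common form of degree $e$" is closed by realizing it as a proper image). Everything else — that $\Sigma$ is closed, that projection from a product with $\P^n$ is proper — is routine. I would also remark, as the author's parenthetical note anticipates, that the codimension-two set $\Delta_0 = B(\psi)$ varies with $\psi$ and is not constant on $V^{(1)}$; this does not affect openness, since openness of $V^{(1)}$ is a statement purely about which $\psi$ have $\dim B(\psi) \leq n-2$.
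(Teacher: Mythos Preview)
Your argument is correct. The paper's proof consists of the single sentence ``This follows from a straightforward determinantal argument,'' with no further details, so there is little to compare against at the level of actual steps. Your incidence-variety approach (closedness of $\Sigma\subset\P^n\times V$, properness of the projection, Chevalley semicontinuity of fibre dimension) is a clean and fully rigorous way to show that $\{\psi:\dim B(\psi)\geq n-1\}$ is closed; you are also right to flag that properness alone only gives closedness of ``base locus nonempty'' and that the semicontinuity step is the substantive one. Your parenthetical alternative---writing $V\setminus V^{(1)}$ as the finite union over $e=1,\ldots,k$ of the images of the multiplication maps $\P(\mathbb{C}[z]_e)\times\mathbb{C}[z]_{k-e}^{\,N+1}\to V$---is likely closer to what the author has in mind by ``determinantal,'' since the condition that a fixed form $g$ divides each $\psi_i$ is linear in the coefficients of $\psi$ and the existence of such a $g$ can be encoded by rank conditions on Sylvester-type matrices. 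Either route proves the lemma; yours has the virtue of being self-contained rather than gestural.
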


\begin{proof}
This follows from a straightforward determinantal argument.
\end{proof}

By hypothesis $4$, we are considering only Lagrangian fibrations which are maximal variations (around generic fibres). Therefore the classifying map should be an immersion.

\begin{definition}
Define $V^{(2)}\subset V^{(1)}$ to be the subset of maps such that $\psi:\P^n\dashrightarrow\P^N$ is an immersion at a generic point.
\end{definition}

\begin{lemma}
The subset $V^{(2)}$ is (Zariski) open in $V^{(1)}$.
\end{lemma}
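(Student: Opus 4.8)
The plan is to turn the geometric condition ``immersion at a generic point'' into a rank condition on a genuinely globally-defined matrix, and then to extract openness from the fact that a product projection is an open morphism.

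First I would record a pointwise criterion. Viewing $\psi=(\psi_0,\ldots,\psi_N)$ as a homogeneous polynomial map $\mathbb{C}^{n+1}\rightarrow\mathbb{C}^{N+1}$, let $J\psi$ denote its $(N+1)\times(n+1)$ Jacobian matrix $(\partial\psi_i/\partial z_j)$, whose entries are bihomogeneous in the coefficients of $\psi$ and in $z=(z_0,\ldots,z_n)$. For $[p]\in\P^n$ outside the base locus of $\psi$ and any lift $\tilde p\in\mathbb{C}^{n+1}\backslash 0$, the Euler relation $J\psi(\tilde p)\cdot\tilde p=k\,\psi(\tilde p)$ shows that $\tilde p\notin\ker J\psi(\tilde p)$ and that $J\psi(\tilde p)$ descends to the differential $d\psi_{[p]}\colon\mathbb{C}^{n+1}/\langle\tilde p\rangle\rightarrow\mathbb{C}^{N+1}/\langle\psi(\tilde p)\rangle$. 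A short linear-algebra computation then gives: $\psi$ is regular and an immersion at $[p]$ if and only if $[p]$ is not in the base locus and $\mathrm{rank}\,J\psi(\tilde p)=n+1$. In particular the locus in $\P^n$ where $\psi$ is an immersion is Zariski open; and since $\psi\in V^{(1)}$ has base locus $\Delta_0$ of codimension at least two, the domain $\P^n\backslash\Delta_0$ is irreducible, so $\psi$ is an immersion at a generic point exactly when it is an immersion at some point.

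Next I would introduce the incidence set
$$\Omega:=\bigl\{(\psi,[p])\in V^{(1)}\times\P^n : \psi([p])\neq 0\ \text{ and }\ \mathrm{rank}\,J\psi(\tilde p)=n+1\bigr\}.$$
The condition $\psi([p])\neq 0$ (some $\psi_i(p)\neq 0$) and the condition $\mathrm{rank}\,J\psi(\tilde p)\geq n+1$ (some $(n+1)\times(n+1)$ minor of $J\psi$ is non-zero at $p$) are each cut out by the non-vanishing of bihomogeneous forms on $V^{(1)}\times\P^n$, so $\Omega$ is Zariski open there. By the pointwise criterion together with the irreducibility of $\P^n\backslash\Delta_0$, a map $\psi\in V^{(1)}$ lies in $V^{(2)}$ precisely when the fibre $\Omega\cap(\{\psi\}\times\P^n)$ is non-empty, i.e.\ precisely when $\psi\in\mathrm{pr}_1(\Omega)$ for the first projection $\mathrm{pr}_1\colon V^{(1)}\times\P^n\rightarrow V^{(1)}$. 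As $\mathrm{pr}_1$ is a product projection (hence flat and locally of finite presentation, hence an open morphism), $V^{(2)}=\mathrm{pr}_1(\Omega)$ is open in $V^{(1)}$.

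The one point that needs a little care is the reduction from ``immersion at a generic point'' to ``immersion at some point'': this rests on the immersion locus of a fixed $\psi$ being open and on the domain $\P^n\backslash\Delta_0$ being irreducible, and it is precisely the defining condition of $V^{(1)}$ that supplies the latter. Everything else is formal, the key move being to argue with the globally-defined Jacobian matrix $J\psi$ rather than with the rational map $\psi$ itself (which is only defined away from the varying locus $\Delta_0$), and then to invoke that projections are open maps.
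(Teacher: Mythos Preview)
Your argument is correct and is precisely a fleshed-out version of the ``straightforward determinantal argument'' the paper invokes: the rank condition on the Jacobian $J\psi$ is the relevant determinantal locus, and your passage through the open incidence set $\Omega\subset V^{(1)}\times\P^n$ together with openness of the projection is the standard way to extract openness in $V^{(1)}$ from it. The only point worth noting is that you were careful to justify the equivalence of ``immersion at a generic point'' with ``immersion at some point'' via irreducibility of $\P^n\backslash\Delta_0$, which the paper leaves implicit.
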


\begin{proof}
This also follows from a straightforward determinantal argument.
\end{proof}

Next, the classifying map must factor through the embedding of the partial compactification of the moduli space of abelian varieties in $\P^N$.

\begin{definition}
Define $V^{(3)}\subset V^{(2)}$ to be the subset of maps such that the image of $\psi$ lies in $\mathcal{A}^*_{d_1,\ldots,d_n}\subset\P^N$ but {\em not\/} entirely inside the boundary divisor $D\subset\mathcal{A}^*_{d_1,\ldots,d_n}$.
\end{definition}

\begin{lemma}
\label{3in2}
The subset $V^{(3)}$ is (Zariski) locally closed in $V^{(2)}$, i.e., open in its closure.
\end{lemma}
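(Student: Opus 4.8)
The plan is to separate the statement ``$V^{(3)}$ is locally closed in $V^{(2)}$'' into two conditions: (a) the image of $\psi$ lies inside $\mathcal{A}^*_{d_1,\ldots,d_n}\subset\P^N$, and (b) the image does not lie entirely inside the boundary divisor $D$. I would show that condition (a) cuts out a closed subset $W\subset V^{(2)}$, and that condition (b) is an open condition on $W$; then $V^{(3)}=W\setminus(\text{boundary locus})$ is open in $W$, hence locally closed in $V^{(2)}$ (and, as the lemma asks, open in its closure, since its closure is contained in $W$).

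For (a), the key point is that $\mathcal{A}^*_{d_1,\ldots,d_n}$ is a closed subvariety of $\P^N$, say cut out by homogeneous forms $F_1,\ldots,F_r\in\C[w_0,\ldots,w_N]$. The condition that the image of $\psi=(\psi_0,\ldots,\psi_N)$ lie in this subvariety is that each composite $F_\alpha(\psi_0(z),\ldots,\psi_N(z))$ vanish identically as a polynomial in $z_0,\ldots,z_n$. Each $F_\alpha\circ\psi$ is a polynomial whose coefficients are polynomial (in fact, if $F_\alpha$ has degree $e_\alpha$, homogeneous of degree $e_\alpha$) functions of the coefficients of the $\psi_i$; requiring all these coefficients to vanish is a finite system of polynomial equations on $V$, hence cuts out a Zariski-closed subset. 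Intersecting with $V^{(2)}$ gives the closed subset $W$. One subtlety worth a remark: $\psi$ is a priori only a rational map, regular off a codimension-two set $\Delta_0$; but ``$F_\alpha\circ\psi\equiv 0$'' as an identity of polynomials is the right formulation, and it is equivalent to the image of the regular part landing in $\mathcal{A}^*$ precisely because $\mathcal{A}^*$ is closed and $\P^n\setminus\Delta_0$ is dense. (There is a harmless scheme-theoretic point here: one should use a set of $F_\alpha$ defining $\mathcal{A}^*$ at least set-theoretically, which is all that is needed since we only care about the image as a set.)

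For (b), on $W$ the map $\psi$ factors through $\bar\phi:\P^n\setminus\Delta_0\to\mathcal{A}^*_{d_1,\ldots,d_n}$, and we must express ``$\mathrm{im}(\bar\phi)\not\subset D$'' as an open condition. The boundary $D$ is itself a closed subvariety of $\mathcal{A}^*$, so it is cut out in $\P^N$ by additional homogeneous forms $G_1,\ldots,G_s$; the image lies entirely in $D$ iff every $G_\beta\circ\psi\equiv 0$, which is again a closed condition on $W$. Hence its complement — the locus where at least one $G_\beta\circ\psi\not\equiv 0$, i.e., where some coefficient of some $G_\beta\circ\psi$ is nonzero — is open in $W$. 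Therefore $V^{(3)}$, the intersection of $W$ with this open set, is open in $W$. Since $W$ is closed in $V^{(2)}$, we get $\overline{V^{(3)}}\subseteq W$, so $V^{(3)}$ is open in $\overline{V^{(3)}}$, i.e., locally closed; this proves the lemma.

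The routine verifications are the two ``coefficients depend polynomially on the input'' observations, which make both conditions algebraic. The only genuinely delicate point — more conceptual than technical — is being careful that $\psi$ is a rational map and phrasing the incidence conditions as identities of polynomials rather than pointwise conditions, so that the locus where $\psi$ fails to be regular does not interfere; once that is set up correctly, everything else is a direct application of the fact that a finite system of polynomial equations (resp. the negation of one) defines a closed (resp. open) subset of an affine space. I would also add a one-line remark, as flagged in the definition of $V^{(1)}$, that the indeterminacy locus $\Delta_0$ genuinely varies with $\psi\in V^{(1)}$, which is why the formulation in terms of polynomial identities rather than a fixed open set is the natural one.
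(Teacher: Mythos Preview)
Your overall strategy is right, and the concrete implementation via polynomial identities in the coefficients of $\psi$ is exactly what underlies the paper's terse ``this is clearly a closed condition''. However, there is a genuine gap in step (a): you assert that $\mathcal{A}^*_{d_1,\ldots,d_n}$ is a \emph{closed} subvariety of $\P^N$, and this is false. The space $\mathcal{A}^*_{d_1,\ldots,d_n}$ is only a \emph{partial} compactification of $\mathcal{A}_{d_1,\ldots,d_n}$; it is quasi-projective but not projective, so its image under the embedding by the very ample $H$ is only locally closed in $\P^N$. Consequently there are no homogeneous forms $F_1,\ldots,F_r$ cutting out $\mathcal{A}^*$ itself, and your set $W$ is not defined as stated.

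The paper's fix is to replace $\mathcal{A}^*$ by its Zariski closure $\overline{\mathcal{A}^*}$ in $\P^N$ for the closed part of the argument: the set $\bar V^{(3)}$ of $\psi\in V^{(2)}$ whose image lies in $\overline{\mathcal{A}^*}$ is closed (your polynomial-identity argument applies verbatim to equations for $\overline{\mathcal{A}^*}$). The open part then uses two facts at once: $\mathcal{A}^*$ is open in $\overline{\mathcal{A}^*}$, and $\mathcal{A}_{d_1,\ldots,d_n}=\mathcal{A}^*\setminus D$ is open in $\mathcal{A}^*$, hence $\mathcal{A}_{d_1,\ldots,d_n}$ is open in $\overline{\mathcal{A}^*}$. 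From this one argues that $V^{(3)}$ is open inside $\bar V^{(3)}$. Your step (b) handles the ``not contained in $D$'' part correctly, but you also need to account for the possibility that the image meets $\overline{\mathcal{A}^*}\setminus\mathcal{A}^*$; the paper absorbs this into the same openness statement. Once you make this adjustment, your argument and the paper's coincide.
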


\begin{proof}
Consider first the subset of maps in $V^{(2)}$ such that the image of $\psi$ lies in the closure of $\mathcal{A}^*_{d_1,\ldots,d_n}$ in $\P^N$. This is clearly a closed condition, and in fact this subset is the closure $\bar{V}^{(3)}$ of $V^{(3)}$. Since $\mathcal{A}^*_{d_1,\ldots,d_n}$ is open in its closure, and since the complement $\mathcal{A}_{d_1,\ldots,d_n}$ of the boundary divisor $D$ is open in $\mathcal{A}^*_{d_1,\ldots,d_n}$, $V^{(3)}$ will be open in $\bar{V}^{(3)}$.
\end{proof}

\begin{remark}
Each $\psi\in V^{(3)}$ gives a rational map $\psi:\P^n\dashrightarrow\mathcal{A}^*_{d_1,\ldots,d_n}$, regular on $\P^n\backslash\Delta_0$, whose image is not contained in the boundary divisor $D\subset\mathcal{A}^*_{d_1,\ldots,d_n}$. Thus the closure of $\psi^{-1}(D)$ is a divisor in $\P^n$, which we denote by $\Delta$.

Again, $\Delta\subset\P^n$ is not fixed, but depends on $\psi$. Indeed, we can pull back the boundary divisor $D$ by the universal map
$$V^{(3)}\times\P^n\dashrightarrow\mathcal{A}^*_{d_1,\ldots,d_n}$$
to obtain a universal discriminant
$$\mathcal{D}\subset V^{(3)}\times\P^n$$
which gives the appropriate $\Delta$ when restricted to each $\{\psi\}\times\P^n$. The indeterminacy of the universal map also yields
$$\mathcal{D}_0\subset\mathcal{D}\subset V^{(3)}\times\P^n,$$
giving the appropriate $\Delta_0$ for each $\psi\in V^{(3)}$. 
\end{remark}

If $\psi\in V^{(3)}$ then we can pull back the universal family $\mathcal{X}^*\rightarrow\mathcal{A}^*_{d_1,\ldots,d_n}$ by $\psi$, to obtain a family $X\rightarrow\P^n\backslash\Delta_0$ of abelian varieties of polarization type $(d_1,\ldots,d_n)$, and rank-one semi-stable degenerations thereof. Note that $\mathcal{A}_{d_1,\ldots,d_n}$ is not a fine moduli space, so there could be more than one family of abelian varieties $X\rightarrow\P^n\backslash\Delta$ associated to the map $\psi|_{\P^n\backslash\Delta}$ (where, by the definition of $\Delta$, $\P^n\backslash\Delta$ is the inverse image of $\mathcal{A}_{d_1,\ldots,d_n}$ under $\psi$). As described in the previous section, these different families would have the same classifying map but different homological invariants. However, we saw that the homological invariant is uniquely determined by the classifying map if the generic singular fibres are required to be semi-stable. Therefore the above family $X\rightarrow\P^n\backslash\Delta_0$ is uniquely determined.

Next we need to add a condition that will ensure that the family $X\rightarrow\P^n\backslash\Delta_0$ can be completed to a family over $\P^n$. We have a (regular) map from $\P^n\backslash\Delta_0$ to $\mathcal{A}^*_{d_1,\ldots,d_n}$, but it is too much to expect this to extend to a map from $\P^n$ to, say, a toroidal compactification of $\mathcal{A}_{d_1,\ldots,d_n}$ (indeed, the author knows no examples in dimension $2n\geq 4$ where this happens; Lagrangian fibrations always seem to contain unstable fibres in higher codimension). On the other hand, the fibres $X_t$ are polarized and can be embedded in projective space. So instead of using compactifications of moduli spaces of abelian varieties, we will use Hilbert schemes.

A Lagrangian fibration $\pi:X\rightarrow\P^n$ is equidimensional, and therefore a flat family. Our hypothesis $2$ ensures the existence of a very ample line bundle $A$ on $X$ which restricts to a polarization $A_t$ of type $(d_1,\ldots,d_n)$ on a generic smooth fibre $X_t$. Thus
$$h^0(X_t,A_t)=d_1\cdots d_n$$
and the higher cohomology must vanish. We will show that this is also true for singular fibres.

\begin{claim}
Let $t$ be an arbitrary point in $\P^n$ and let $A_t$ be the restriction of the very ample line bundle $A$ to the fibre $X_t$. Then
$$h^j(X_t,A_t)=\left\{\begin{array}{ll} d_1\cdots d_n & j=0, \\ 0 & j>0. \end{array}\right.$$
\end{claim}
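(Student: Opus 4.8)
The plan is to reduce the claim to a cohomology-and-base-change argument combined with the flatness of $\pi:X\to\P^n$. Since $\pi$ is equidimensional onto a smooth base, it is flat, so the function $t\mapsto\chi(X_t,A_t)$ is locally constant on $\P^n$, hence (by connectedness) constant and equal to its value $d_1\cdots d_n$ on a generic smooth fibre. Thus it suffices to prove that $h^j(X_t,A_t)=0$ for $j>0$ and \emph{every} $t$, since then $h^0(X_t,A_t)=\chi(X_t,A_t)=d_1\cdots d_n$ automatically. The vanishing on generic fibres is the hypothesis $h^0(X_t,A_t)=d_1\cdots d_n$ together with Riemann–Roch for an ample line bundle on an abelian variety; the real content is extending it to all fibres.

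For the higher vanishing I would argue in two stages. First, over a generic point $t$ of the discriminant $\Delta$, the fibre $X_t$ is a rank-one semi-stable degeneration of abelian varieties by hypothesis $3$; such a $Y$ is a cycle of $\P^1$-bundles over abelian varieties (or a single one with zero- and infinity-sections glued with a translation), and $A_t$ restricts to a polarization on it. One computes $h^j(Y,A_t)$ directly from the normalization sequence: the normalization $\widetilde{Y}$ is a disjoint union of $\P^1$-bundles $\P(\O_{A_i}\oplus\mathcal{L}_i)\to A_i$, on each of which the pushforward of the relevant restriction of $A_t$ is an ample line bundle on $A_i$ twisted along the fibres, so its higher cohomology vanishes by Leray plus Kodaira vanishing on the abelian variety $A_i$; the conductor contributions in the normalization sequence are the cohomology of ample line bundles on the $A_i$'s as well, and a diagram chase shows the higher cohomology of $A_t$ on $Y$ vanishes and $h^0$ comes out to $d_1\cdots d_n$. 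Second, for an \emph{arbitrary} $t\in\P^n$, I would invoke semicontinuity: $t\mapsto h^j(X_t,A_t)$ is upper semicontinuous, and the locus where it could jump is contained in $\Delta_0$, which has codimension $\ge 2$. But a semicontinuity jump locus for $h^j$, if nonempty, must be a divisor when $j$ equals the generic dimension minus one—more cleanly, one uses that $\pi$ is flat so $R^n\pi_*A$ and the lower direct images are coherent and their behaviour is governed by the complex $R\pi_*A$; the generic vanishing of $h^{>0}$ together with flatness forces $R^j\pi_*A=0$ for $j>0$ on the complement of a closed set, and cohomology-and-base-change then shows this closed set is empty once we know the vanishing on the codimension-one boundary, because coherence of $\pi_*A$ and constancy of $\chi$ rule out jumps along $\Delta_0$ alone. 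Concretely, I would show $R^j\pi_*A=0$ for $j>0$ by noting it is supported in codimension $\ge 2$ (vanishing on $\P^n\setminus\Delta$ and on generic points of each $\Delta_{ij}$) yet would have to be a sheaf whose support is pure of codimension one if nonzero, a contradiction; then base change gives $h^j(X_t,A_t)=0$ for all $t$ and all $j>0$, and the Euler characteristic computation finishes the $j=0$ statement.

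The main obstacle is the explicit computation of $h^j(X_t,A_t)$ on the rank-one semi-stable degeneration $X_t$ over a generic point of $\Delta$: one must track the polarization type across the cycle of $\P^1$-bundles, check that each graded piece in the normalization (Mayer–Vietoris) sequence has vanishing higher cohomology, and verify that the connecting maps assemble so that $h^{>0}$ of the whole degenerate fibre vanishes and $h^0=d_1\cdots d_n$—in particular one must use that the gluing translation does not introduce extra global sections or obstructions. The argument that the jump locus cannot be codimension $\ge 2$ is the other delicate point; it rests on coherence of the higher direct images together with the fact that a torsion sheaf arising as $R^j\pi_*A$ of a flat family with constant Euler characteristic and generically vanishing fibrewise cohomology cannot be supported purely in codimension $\ge 2$, which I would phrase via the standard spectral sequence / base-change exact sequence relating $R^j\pi_*A$, its fibres, and $\mathrm{Tor}$ terms.
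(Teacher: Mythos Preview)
Your reduction to proving $h^j(X_t,A_t)=0$ for all $t$ and $j>0$, and your final Euler-characteristic step, are fine and match the paper. The gap is in your mechanism for passing from generic vanishing to vanishing everywhere.

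The assertion that a torsion sheaf $R^j\pi_*A$ ``would have to be a sheaf whose support is pure of codimension one if nonzero'' is not a theorem, and neither is the variant you propose at the end (``a torsion sheaf arising as $R^j\pi_*A$ of a flat family with constant Euler characteristic and generically vanishing fibrewise cohomology cannot be supported purely in codimension~$\ge 2$''). Upper semicontinuity of $t\mapsto h^j(X_t,A_t)$ only tells you the jump locus is closed; nothing forces it to be a divisor. Flatness plus constancy of~$\chi$ is automatic and imposes no further constraint on the support of the individual $R^j\pi_*A$. So even if your Stage~1 computation on the rank-one semi-stable fibres goes through, you are left with $R^j\pi_*A$ supported on~$\Delta_0$ and no way to kill it.

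The paper replaces this entire two-stage scheme with a single deep input: Hacon's theorem that $R^j\pi_*(\omega_X\otimes A)$ is torsion free for $j\ge 0$. Since $X$ is holomorphic symplectic, $\omega_X\cong\O_X$, so $R^j\pi_*A$ itself is torsion free; combined with generic vanishing this forces $R^j\pi_*A=0$ for $j>0$ outright. One then runs reverse induction via Mumford's base-change criterion, starting from $H^{n+1}(X_t,A_t)=0$ for dimension reasons, to get $H^j(X_t,A_t)=0$ for all $t$ and all $j>0$. Note in particular that no direct computation on the degenerate fibres (your Stage~1) is ever needed; the triviality of $\omega_X$ is doing the real work.
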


\begin{proof}
As already explained, $h^j(X_t,A_t)=0$ for $j>0$ and for generic $t$. Therefore the higher direct image sheaves $R^j\pi_*A$ are torsion. On the other hand, Theorem~2.2 of Hacon~\cite{hacon04} states that $R^j\pi_*(\omega_X\otimes A)$ is torsion free for $j\geq 0$.
Since $\omega_X$ is trivial, we conclude that $R^j\pi_*A$ must vanish for $j>0$.

Now Corollary~3 on page~50 of Mumford~\cite{mumford08} states that if $\H^j(X_t,A_t)$ vanishes for all $t\in\P^n$ then the natural map
$$R^{j-1}\pi_*A\otimes_{\mathcal{O}_t} k(t)\longrightarrow \H^{j-1}(X_t,A_t)$$
is an isomorphism for all $t\in\P^n$. Since $\H^{n+1}(X_t,A_t)$ vanishes for all $t\in\P^n$ for dimension reasons, and since $R^n\pi_*A$ vanishes, we conclude that $\H^n(X_t,A_t)$ vanishes for all $t\in\P^n$. Continuing by reverse induction, we find that $\H^j(X_t,A_t)$ vanishes for all $j>0$ and for all $t\in\P^n$.

Finally, $\pi:X\rightarrow\P^n$ is a flat family and
$$h^0(X_t,A_t)=\chi(X_t,A_t)$$
is topological, so for all $t\in\P^n$ we find that $h^0(X_t,A_t)$ agrees with the value $d_1\cdots d_n$ for a smooth fibre. Moreover, $\pi_*A$ is a locally free sheaf of rank $d_1\cdots d_n$ on $\P^n$.
\end{proof}

Since $A_t$ is very ample, each fibre $X_t$ is embedded in $\P(\H^0(X_t,A_t)^{\vee})$, which by the claim is isomorphic to $\P^{M-1}$, where $M=d_1\cdots d_n$. We would like to interpret this as giving a point in $\mathrm{Hilb}$, where $\mathrm{Hilb}$ denotes the component of the Hilbert scheme of subschemes of $\P^{M-1}$ which contains the abelian varieties with polarization type $(d_1,\ldots,d_n)$. But the subscheme $X_t\subset\P^{M-1}$ is only defined up to the action of $\mathrm{PGL}(M,\mathbb{C})$ on $\P^{M-1}$. To get an actual point in $\mathrm{Hilb}$ for each $t$, we need to choose a basis for $\H^0(X_t,A_t)$. To summarize, there is a principal $\mathrm{GL}(M,\mathbb{C})$-bundle $G$ on $\P^n$, which is the bundle of local frames of the rank $M$ vector bundle $\pi_*A$ on $\P^n$, and a map from $G$ to $\mathrm{Hilb}$.

To make this work for an arbitrary classifying map, we first need the rank $M$ vector bundle to extend to $\P^n$. Then we need the map from the associated $\mathrm{GL}(M,\mathbb{C})$-bundle to $\mathrm{Hilb}$ to extend.

\begin{definition}
Let $\psi\in V^{(3)}$, let $\pi:X\rightarrow\P^n\backslash\Delta_0$ be the corresponding family of abelian varieties, and let $A$ be the relative polarization for this family (restricted to each fibre $X_t$, $A_t:=A|_{X_t}$ is very ample, but $A$ need not be very ample or even ample on $X$ itself). Define $V^{(4)}\subset V^{(3)}$ to be the subset of maps such that the rank $M$ vector bundle $\pi_*A$ on $\P^n\backslash\Delta_0$ extends to a vector bundle on all of $\P^n$.
\end{definition}

\begin{lemma}
The subset $V^{(4)}$ is a constructible subset of $V^{(3)}$, i.e., it is a finite union of (Zariski) locally closed subsets in $V^{(3)}$.
\end{lemma}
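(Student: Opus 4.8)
The plan is to reinterpret membership in $V^{(4)}$ as the requirement that a canonically-defined reflexive sheaf be locally free, and then to spread that requirement out over $V^{(3)}$ and conclude by Noetherian induction together with the openness of the locally free locus. Begin with the reformulation. By the Claim, $\pi_*A$ is locally free of rank $M=d_1\cdots d_n$ on $\P^n\backslash\Delta_0$, and $\Delta_0$ has codimension at least two in the smooth variety $\P^n$. Writing $j:\P^n\backslash\Delta_0\hookrightarrow\P^n$ for the inclusion, the sheaf $j_*(\pi_*A)$ is then coherent and reflexive (locally it agrees with the double dual of any coherent extension of $\pi_*A$), and it is the \emph{only} sheaf on $\P^n$ that can extend $\pi_*A$ to a vector bundle: a vector bundle extension would be reflexive, and a reflexive sheaf on the smooth variety $\P^n$ is recovered from its restriction to the complement of any codimension-two closed subset. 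Hence $\psi\in V^{(4)}$ if and only if $j_*(\pi_*A)$ is locally free on all of $\P^n$. Since a reflexive sheaf on $\P^n$ is automatically locally free away from a closed subset of codimension at least three, $V^{(4)}=V^{(3)}$ when $n\leq 2$, and the content of the lemma lies in the range $n\geq 3$.

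Now work in families. Pulling back the universal family $\mathcal{X}^*\rightarrow\mathcal{A}^*_{d_1,\ldots,d_n}$ along the universal rational map $V^{(3)}\times\P^n\dashrightarrow\mathcal{A}^*_{d_1,\ldots,d_n}$, which is regular on the complement $W:=(V^{(3)}\times\P^n)\backslash\mathcal{D}_0$ of the universal indeterminacy locus, produces a flat family $\rho:\mathcal{X}\rightarrow W$ of $(d_1,\ldots,d_n)$-polarized abelian varieties and their rank-one semi-stable degenerations, equipped with a relative polarization $\mathcal{P}$. The relative version of the Claim (the higher-cohomology vanishing for the rank-one semi-stable degenerations being part of the structure of $\mathcal{A}^*_{d_1,\ldots,d_n}$) shows that $\mathcal{F}:=\rho_*\mathcal{P}$ is locally free of rank $M$ on $W$ and that its formation commutes with base change, so that its restriction to $\{\psi\}\times(\P^n\backslash\Delta_0)$ is the sheaf $\pi_*A$ attached to $\psi$. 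Let $\iota:W\hookrightarrow V^{(3)}\times\P^n$ be the open inclusion; since $\mathcal{D}_0$ has codimension at least two and $\mathcal{F}$ is locally free, $\mathcal{G}:=\iota_*\mathcal{F}$ is coherent, and on each fibre there is a canonical comparison map $\mathcal{G}|_{\{\psi\}\times\P^n}\rightarrow j_*(\pi_*A)$.

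Finally, apply Noetherian induction to $V^{(3)}$. By generic flatness, together with a further shrinking arranged so that the formation of $\mathcal{G}$ commutes with restriction to the fibres over the base — which makes the comparison map above an isomorphism — there is a dense open $U\subseteq V^{(3)}$ with $\mathcal{G}|_{\{\psi\}\times\P^n}\cong j_*(\pi_*A)$ for every $\psi\in U$. Over such a $U$ the locus in $U\times\P^n$ where $\mathcal{G}$ fails to be locally free is closed, hence its image $Z\subseteq U$ under the proper projection $U\times\P^n\rightarrow U$ is closed, and by construction $U\backslash Z=V^{(4)}\cap U$; thus $V^{(4)}$ is open on a dense open subset of $V^{(3)}$. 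Replacing $V^{(3)}$ by the finitely many irreducible components of $V^{(3)}\backslash U$ and iterating — a process that terminates because $V^{(3)}$ is Noetherian — exhibits $V^{(4)}$ as a finite union of locally closed subsets of $V^{(3)}$, i.e., as a constructible set.

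The step I expect to be the main obstacle is ensuring, on each stratum, that $\mathcal{G}|_{\{\psi\}\times\P^n}$ genuinely computes the reflexive hull $j_*(\pi_*A)$: neither pushforward from an open subset nor passage to the double dual commutes with restriction to a fibre in general, so one cannot simply extend $\mathcal{F}$ once and restrict it to the various $\{\psi\}\times\P^n$. It is exactly here that generic flatness — applied to $\mathcal{G}$ and to the relevant local-cohomology sheaves along $\mathcal{D}_0$ — is needed to force the comparison map to be an isomorphism over a dense open of $V^{(3)}$. Once this is in hand, the openness of the locally free locus of a coherent sheaf and the properness of the projection $U\times\P^n\rightarrow U$ finish the argument.
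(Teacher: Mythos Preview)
Your proposal is correct and follows essentially the same strategy as the paper: reformulate membership in $V^{(4)}$ as local freeness of the pushforward $\iota_*(\pi_*A)$, pass to the universal family over $V^{(3)}\times\P^n$, stratify so that this pushforward is flat and commutes with restriction to fibres, and then use openness of the locally free locus together with properness of the projection. The one substantive difference is that the paper does not carry out the stratification and base-change argument by hand; it simply invokes Koll{\'a}r's flattening result for hulls (Theorem~21 of \emph{Hulls and husks}, arXiv:0805.0576), which packages exactly the Noetherian-induction-plus-generic-flatness argument you sketch, including the delicate point you flag in your final paragraph about making $\iota_*\mathcal{F}$ restrict correctly to each fibre. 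Your version is thus a more self-contained reproof of the cited theorem in this special case; the trade-off is that Koll{\'a}r's statement absorbs the local-cohomology bookkeeping you would otherwise have to verify.
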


\begin{proof}
If a vector bundle $W$ on $\P^n\backslash\Delta_0$ extends to $\P^n$, then the extended vector bundle will be given by $\iota_*W$, where $\iota:\P^n\backslash\Delta_0\hookrightarrow\P^n$ is the inclusion map. Therefore $V^{(4)}$ is the subset of maps for which $\iota_*W$ is locally free, where $W$ is the vector bundle $\pi_*A$ on $\P^n\backslash\Delta_0$.

Consider the universal situation. Thus we have a vector bundle $\mathcal{W}$ on $(V^{(3)}\times\P^n)\backslash\mathcal{D}_0$ which we can push forward by the inclusion
$$\iota:(V^{(3)}\times\P^n)\backslash\mathcal{D}_0\hookrightarrow V^{(3)}\times\P^n$$
to get a coherent sheaf $\iota_*\mathcal{W}$. Restricting $\iota_*\mathcal{W}$ to each $\{\psi\}\times\P^n$ gives the appropriate $\iota_*W$ corresponding to $\psi$. In other words, we obtain a family of coherent sheaves on $\P^n$ parametrized by $\psi\in V^{(3)}$.

Koll{\'a}r studied this situation, and proved that there is a decomposition of $V^{(3)}$ into finitely many locally closed subsets $V^{(3)}_j$ such that the sheaves $\iota_*W$ on $\P^n$ form a flat family as $\psi$ varies in any given $V^{(3)}_j$ (see Theorem~21 in~\cite{kollar08}). Moreover, the subset
$$V^{(4)}_j:=\{\psi\in V^{(3)}_j|\iota_*W\mbox{ is locally free}\}$$
is open in $V^{(3)}_j$. We conclude that $V^{(4)}$ is constructible, since it is the (finite) union of the locally closed subsets $V^{(4)}_j$.
\end{proof}

\begin{definition}
Let $\psi\in V^{(4)}$, let $W$ be the resulting vector bundle on $\P^n$ given by extending $\pi_*A$, and let $G$ be the principal $\mathrm{GL}(M,\mathbb{C})$-bundle on $\P^n$ given by the bundle of local frames of $W$. For $t\in\P^n\backslash\Delta_0$, each point $f\in G_t$ represents a choice of basis for $\H^0(X_t,A_t)$, so there is a morphism
$$\alpha:G|_{\P^n\backslash\Delta_0}\rightarrow\mathrm{Hilb}$$
given by mapping $f$ to the point in the Hilbert scheme $\mathrm{Hilb}$ which represents
$$X_t\hookrightarrow\P(\H^0(X_t,A_t)^{\vee})\stackrel{f}{\cong}\P^{M-1}.$$
Define $V^{(5)}\subset V^{(4)}$ to be the subset of maps $\psi$ such that $\alpha$ extends to a morphism
$$\bar{\alpha}:G\rightarrow\mathrm{Hilb}.$$
\end{definition}

\begin{lemma}
The subset $V^{(5)}$ is (Zariski) locally closed in $V^{(4)}$.
\end{lemma}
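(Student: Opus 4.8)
The plan is to show that the extendability of the morphism $\alpha:G|_{\P^n\backslash\Delta_0}\rightarrow\mathrm{Hilb}$ over $\Delta_0$ is a locally closed condition on $\psi\in V^{(4)}$, using the same universal/relative framework that was used for $V^{(4)}$. First I would set up the universal objects over $V^{(4)}$: after stratifying $V^{(4)}$ into finitely many locally closed pieces if necessary, we have a vector bundle $\mathcal{W}$ on $V^{(4)}\times\P^n$ (the extension of $\pi_*A$, which is flat over $V^{(4)}$ by the previous lemma), hence a principal $\mathrm{GL}(M,\C)$-bundle $\mathcal{G}$ on $V^{(4)}\times\P^n$ restricting to $G$ on each slice $\{\psi\}\times\P^n$. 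Over $(V^{(4)}\times\P^n)\backslash\mathcal{D}_0$ the universal family of polarized abelian varieties together with the frame bundle gives a universal morphism $A:\mathcal{G}|_{(V^{(4)}\times\P^n)\backslash\mathcal{D}_0}\rightarrow\mathrm{Hilb}$, equivalently a flat family of subschemes of $\P^{M-1}$ parametrized by the total space of $\mathcal{G}$ minus the preimage of $\mathcal{D}_0$.

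Next I would use the fact that $\mathrm{Hilb}$ is projective, so the question of extending a morphism to $\mathrm{Hilb}$ across a codimension-two locus is governed by properness: a rational map from a normal variety to a projective scheme extends over a subset of codimension $\geq 2$ if and only if it does so over every codimension-one point, i.e. it is enough to control the behaviour along $\Delta$. The morphism $\alpha$ is already defined on the codimension-one locus $\Delta\backslash\Delta_0$ (since $\Delta_0$ has codimension two in $\P^n$, its preimage in $G$ has codimension two there too, $G$ being smooth over $\P^n$), so by the valuative criterion of properness applied along the generic points of the components of $\Delta$, the rational map $\bar\alpha:G\dashrightarrow\mathrm{Hilb}$ is automatically defined in codimension one. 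Hence $\bar\alpha$ is in fact a morphism as soon as $G$ is normal, which it is since $\P^n$ is smooth and $G\to\P^n$ is a $\mathrm{GL}(M,\C)$-bundle. Wait — this would make $V^{(5)}=V^{(4)}$; the subtlety is that $\alpha$ is a priori only defined on $G|_{\P^n\backslash\Delta_0}$, not on $G|_{\P^n\backslash\Delta}$ together with the generic points of $\Delta$, so one must first check that $\alpha$ does extend across $\Delta\backslash\Delta_0$. This is exactly hypothesis $3$: the singular fibres over generic points of $\Delta$ are rank-one semi-stable degenerations, and by the Claim the cohomology groups $h^j(X_t,A_t)$ are constant over all of $\P^n$, so the embeddings $X_t\hookrightarrow\P^{M-1}$ extend flatly over $\Delta\backslash\Delta_0$ and thus give a morphism on $G|_{\P^n\backslash\Delta_0'}$ for some $\Delta_0'$ of codimension two. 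So in fact $\bar\alpha$ always exists, and $V^{(5)}=V^{(4)}$; but since we are not asserting that here, the honest statement to prove is that the locus where the rational map $G\dashrightarrow\mathrm{Hilb}$ obtained from the universal family is actually a morphism is locally closed. For this I would invoke the standard fact (e.g. via relative $\mathrm{Hom}$-schemes, or the argument in Kollár~\cite{kollar08} that was already used) that, given the flat family of subschemes over $\mathcal{G}\backslash(\mathrm{preimage\ of\ }\mathcal{D}_0)$, the subset of $V^{(4)}$ over which the induced rational map to the projective scheme $\mathrm{Hilb}$ extends to an everywhere-defined morphism is a finite union of locally closed subsets — the closure of the graph of the universal rational map is a closed condition, and the locus where that closure projects isomorphically onto $\mathcal{G}$ is open in it.

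Concretely, the key steps are: (1) form the universal $\mathrm{GL}(M,\C)$-bundle $\mathcal{G}\to V^{(4)}\times\P^n$ and the universal flat family of subschemes of $\P^{M-1}$ over $\mathcal{G}\backslash\mathcal{G}_0$, where $\mathcal{G}_0$ is the preimage of $\mathcal{D}_0$; (2) this family classifies a universal rational map $\Phi:\mathcal{G}\dashrightarrow\mathrm{Hilb}$ defined away from $\mathcal{G}_0$; (3) take the closure $\bar\Gamma\subset\mathcal{G}\times\mathrm{Hilb}$ of its graph (closed, using $\mathrm{Hilb}$ projective), which is flat over a finite stratification of $V^{(4)}$ into locally closed pieces $V^{(4)}_j$; (4) on each $V^{(4)}_j$, the condition that the first projection $\bar\Gamma\to\mathcal{G}$ be an isomorphism over all of $\mathcal{G}_j$ (equivalently that $\Phi$ be a morphism on every slice) is an open condition, by semicontinuity of fibre dimension and the fact that a proper birational morphism of smooth-over-the-base varieties which is an isomorphism over a dense open set is an isomorphism iff it contracts no divisor — a condition that is open in families; (5) conclude that $V^{(5)}$ is the finite union over $j$ of these open-in-$V^{(4)}_j$ subsets, hence constructible and in particular locally closed in $V^{(4)}$ after further refining the stratification.

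The main obstacle I anticipate is step (4): controlling the locus of indeterminacy of the universal rational map $\Phi$ in families and showing that ``$\Phi$ restricted to the slice $\{\psi\}\times\P^n$ is a morphism'' cuts out a locally closed (not merely constructible) set. The cleanest route is to reduce to codimension one — as sketched above, a rational map from a normal variety to a projective scheme extends across a codimension-$\geq 2$ locus iff it extends over every codimension-one point — and then analyse the behaviour along the generic points of the components $\Delta_{ij}$ of $\Delta$, where hypothesis $3$ pins down the degeneration type; the relative valuative criterion of properness for $\mathrm{Hilb}\to\mathrm{Spec}\,\C$ then shows the extension is unique when it exists, so the locus of existence is detected by a single coherent-sheaf flatness condition along each $\Delta_{ij}$, which is locally closed on $V^{(4)}$ by the Kollár-type result already cited for $V^{(4)}$. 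As with the earlier lemmas, the finite-union-of-locally-closed structure comes from the need to stratify $V^{(4)}$ first so that all the sheaves and families in sight become flat.
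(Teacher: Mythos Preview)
Your proposal contains a recurring error. You claim several times that ``a rational map from a normal variety to a projective scheme extends across a codimension-$\geq 2$ locus iff it extends over every codimension-one point.'' This is false: for such a map the indeterminacy locus \emph{automatically} has codimension $\geq 2$ (by normality of the source and projectivity of the target), yet the map need not extend over it --- think of the Cremona involution of $\P^2$, or the inverse of any blow-up. Your ``Wait'' paragraph half-notices the problem but then misdiagnoses it: by definition $\alpha$ is already a morphism on all of $G|_{\P^n\backslash\Delta_0}$, hence in particular over $\Delta\backslash\Delta_0$, so there is nothing to check there; the entire content of the lemma is extendability over $\Delta_0$, and your reduction to ``behaviour along $\Delta$'' in the final paragraph is vacuous. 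This confusion is the main gap.

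That said, your concrete steps (1)--(5) --- close up the graph of the universal rational map inside $\mathcal{G}\times\mathrm{Hilb}$, stratify for flatness, and on each stratum test whether the first projection $\bar\Gamma\to\mathcal{G}$ is an isomorphism --- are a valid outline once you discard the false shortcut. (One quibble: constructible does not imply locally closed, so your conclusion in (5) is formally weaker than the lemma as stated, though it suffices for the paper's ultimate purpose.) The paper's proof is a more direct variant of the same mechanism: in a local model it resolves the indeterminacy of $\beta:\C^n\dashrightarrow\P^N$ by blowing up to get $\hat\beta:\widehat{\C^n}\to\P^N$ with exceptional locus $E$, and observes that $\beta$ extends iff $\hat\beta(E)$ is a single point; since the sets $U_l=\{\dim\hat\beta(E)\geq l\}$ are closed by semicontinuity, the extendability locus $U_0\backslash U_1$ is locally closed. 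The underlying idea --- semicontinuity of the dimension of what the resolution produces over the indeterminacy locus --- is exactly what makes your step (4) work, but the paper's packaging avoids both the graph closure and the flattening stratification.
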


\begin{proof}
First consider a simpler situation. Suppose we have a rational map $\beta:\mathbb{C}^n\dashrightarrow\P^N$ that is regular on the complement of the origin $O$. Resolving the indeterminacy, we obtain a morphism $\hat{\beta}:\widehat{\mathbb{C}^n}\rightarrow\P^N$ where $\widehat{\mathbb{C}^n}$ is $\mathbb{C}^n$ with the origin blown up, possibly several times. Let $E\subset\widehat{\mathbb{C}^n}$ be the exceptional locus of the blow-up. When considering families of such maps $\beta$, we look at the dimension of the image of $E$ under $\hat{\beta}$. For each integer $l\geq 0$, let $U_l$ be the subset of rational maps $\beta$ such that $\hat{\beta}(E)$ has dimension greater than or equal to $l$; by semi-continuity this is a closed condition, i.e., $U_l$ is a closed subset. Therefore $U:=U_0\backslash U_1$ is a locally closed subset. But $U$ is precisely the subset of rational maps $\beta$ such that $\hat{\beta}(E)$ is a single point, which means that the rational map $\beta$ must extend to a morphism defined on all of $\mathbb{C}^n$. We have thus shown that extendability of the rational map $\beta$ is a locally closed condition.

The above argument can be adapted to show that, in general, extendability of maps is a locally closed condition. Therefore $V^{(5)}$ is locally closed in $V^{(4)}$.
\end{proof}

Suppose $\psi\in V^{(4)}$. As described above, there is an associated map
$$\alpha:G|_{\P^n\backslash\Delta_0}\rightarrow\mathrm{Hilb}.$$
The group $\mathrm{PGL}(M,\mathbb{C})$ acts on the $\mathrm{GL}(M,\mathbb{C})$-bundle $G$ fibrewise. It also acts on $\mathrm{Hilb}$, which is a component of the Hilbert scheme of subschemes of $\P^{M-1}$, by acting on the ambient space. The $\mathrm{PGL}(M,\mathbb{C})$-equivariance of $\alpha$ follows immediately from its definition.

Now if $\psi\in V^{(5)}$ then the map $\alpha$ extends to
$$\bar{\alpha}:G\rightarrow\mathrm{Hilb}.$$
We claim that $\bar{\alpha}$ will also be $\mathrm{PGL}(M,\mathbb{C})$-equivariant. To see this, fix $g\in\mathrm{PGL}(M,\mathbb{C})$. The set
$$\{f\in G|\bar{\alpha}(g.f)=g.\bar{\alpha}(f)\}$$
is closed in $G$ and it contains the dense open subset $G|_{\P^n\backslash\Delta_0}$. It follows that $\bar{\alpha}(g.f)=g.\bar{\alpha}(f)$ for all $f\in G$, and for all $g\in\mathrm{PGL}(M,\mathbb{C})$, proving the claim.

So for each point $t\in\Delta_0$, the fibre $G_t$ maps to a $\mathrm{PGL}(M,\mathbb{C})$-orbit in $\mathrm{Hilb}$, corresponding to a subscheme $X_t\subset\P^{M-1}$ defined up to projective transformations of the ambient space. Moreover, these $X_t$ fit together to give a completion $X\rightarrow\P^n$ of the original family of abelian varieties over $\P^n\backslash\Delta_0$ (we are abusing notation here, using $X$ to denote the original family and its completion, but the meaning should be clear from the context). Thus $V^{(5)}$ is the subset of maps $\psi$ which yield complete families $X\rightarrow\P^n$. Moreover, the fibres $X_t$ all have the same Hilbert polynomial, since they correspond to points in $\mathrm{Hilb}$, and the family $X\rightarrow\P^n$ is flat.

\begin{definition}
Define $V^{(6)}\subset V^{(5)}$ to be the subset of maps such that the corresponding family $X\rightarrow\P^n$ is smooth, i.e., the total space $X$ is smooth. Define $V^{(7)}\subset V^{(6)}$ to be the subset of maps such that $X$ is an irreducible holomorphic symplectic manifold, and thus $X\rightarrow\P^n$ is a Lagrangian fibration.
\end{definition}

\begin{lemma}
The subset $V^{(6)}$ is open in $V^{(5)}$ and the subset $V^{(7)}$ is open in $V^{(6)}$.
\end{lemma}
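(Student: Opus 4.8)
The statement has two parts: $V^{(6)}$ open in $V^{(5)}$, and $V^{(7)}$ open in $V^{(6)}$. Both are openness assertions about a family of projective schemes $X\to\P^n$ parametrized (flatly, via the Hilbert scheme) by the base $V^{(5)}$. The first part is essentially the statement that smoothness of the total space is an open condition in a flat family; the second refines a smooth family to one whose fibres-plus-total-space assemble into an irreducible holomorphic symplectic manifold, and the key point will be that, given smoothness, the extra conditions (simple connectivity, $h^{2,0}=1$) are deformation invariants, hence both open and closed on each connected component, and combined with an open condition (existence of a holomorphic symplectic form) they cut out an open set.

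\emph{Step 1: $V^{(6)}$ open in $V^{(5)}$.} Recall from the discussion preceding the definition that each $\psi\in V^{(5)}$ yields, via $\bar\alpha:G\to\mathrm{Hilb}$ together with the universal family over $\mathrm{Hilb}$, a flat family $X\to\P^n$, and that as $\psi$ varies in $V^{(5)}$ these fit into a flat family $\mathcal{X}\to V^{(5)}\times\P^n\to V^{(5)}$ (the total space $\mathcal{X}$ is the pullback of the universal Hilbert-scheme family along the universal $\bar\alpha$, descended from the $\mathrm{GL}(M)$-bundle using the $\mathrm{PGL}(M)$-equivariance established above). The locus in $V^{(5)}$ over which the total space $\mathcal{X}_\psi=X$ is smooth is then open: smoothness of $X$ is equivalent to $X$ being regular of dimension $2n$ at every point, and in a morphism of finite type $\mathcal{X}\to V^{(5)}$ the set of points of $\mathcal{X}$ where $\mathcal{X}$ is not regular of the expected dimension is closed, so its image in $V^{(5)}$ — which is proper over $V^{(5)}$ since $\mathcal{X}\to V^{(5)}$ is projective — is closed; its complement is $V^{(6)}$.

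\emph{Step 2: $V^{(7)}$ open in $V^{(6)}$.} Over $V^{(6)}$ we have a smooth projective family $\mathcal{X}\to V^{(6)}$ with $n$-dimensional base $\P^n$ fibrewise, all fibres $X$ having trivial canonical bundle (this is forced, as in the earlier lemmas, by $R^n\pi_*\mathcal{O}_X\cong\omega_{\P^n}$ together with $\pi$ being a Lagrangian-type fibration; alternatively one may read triviality of $K_X$ off the Hilbert polynomial, which is constant on $V^{(5)}$). The condition that $X$ be irreducible holomorphic symplectic is: (a) $K_X\cong\mathcal{O}_X$; (b) $X$ simply connected; (c) $h^0(X,\Omega^2_X)=1$; and (d) the generator $\sigma$ of $H^0(X,\Omega^2_X)$ is everywhere non-degenerate. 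Condition (d), given (c), is open: non-degeneracy of $\sigma$ is the non-vanishing of $\sigma^{\wedge n}\in H^0(X,K_X)$, and in a smooth family with $h^0(K)=1$ the section $\sigma^{\wedge n}_\psi$ varies holomorphically, so its nowhere-vanishing is open. For (b) and (c): by Ehresmann's theorem $\mathcal{X}\to V^{(6)}$ is a $C^\infty$ fibre bundle locally, so $\pi_1(X)$ and the Hodge numbers $h^{p,q}(X)$ are locally constant on $V^{(6)}$; hence (b) and (c) are each both open and closed. Since (a) holds identically on $V^{(6)}$, the locus $V^{(7)}$ where all of (a)–(d) hold is the intersection of an open set with a union of connected components of $V^{(6)}$, hence open in $V^{(6)}$.

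\emph{Main obstacle.} The genuine content is bookkeeping: making precise the universal smooth family $\mathcal{X}\to V^{(6)}$ and checking that the fibrewise invariants (fundamental group, $h^{2,0}$, triviality of $K_X$) really are locally constant — this rests on Ehresmann together with the fact that all these fibres are Kähler (indeed projective, being glued from subschemes of $\P^{M-1}$), so Hodge theory applies. One small subtlety worth flagging is that $V^{(6)}$ need not be connected or reduced a priori, but openness is a local statement, so it suffices to argue over each connected component; and replacing $V^{(6)}$ by its reduction changes none of the topological statements. Beyond this the proof is formal, so I would keep it brief: cite Ehresmann and the semicontinuity of Hodge numbers in smooth projective families, note triviality of $K_X$, and conclude.
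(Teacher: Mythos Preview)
Your argument follows essentially the same route as the paper: smoothness of the total space is open in a flat proper family, and the IHS conditions reduce to Hodge-theoretic constraints that are open (indeed locally constant, via Ehresmann) in a smooth projective family. The paper's version is terser and uses the characterization $h^{1,0}=0$, $h^{2,0}=1$, $K_X$ trivial in place of your (b)--(d); it treats triviality of $K_X$ as an \emph{open} condition (via discreteness of $\mathrm{Pic}$ once $h^{1,0}=0$) rather than as holding identically.

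One caveat worth flagging: your assertion that $K_X$ is trivial for \emph{every} $\psi\in V^{(6)}$ is not justified by the references you give. The earlier lemma on $\bar\phi^*L$ runs in the opposite direction (it \emph{uses} $\omega_X\cong\mathcal{O}_X$ to deduce $R^n\pi_*\mathcal{O}_X\cong\omega_{\P^n}$), and the Hilbert polynomial fixed by $\mathrm{Hilb}$ is that of the fibres $X_t\subset\P^{M-1}$, which says nothing about $K_X$. Fortunately this does no damage: your conditions (c) and (d) already force $K_X$ trivial, and openness of (d) can be argued without first knowing $h^0(K_X)=1$ --- either invoke local constancy of $h^{n,0}$ in the smooth family, or simply note that the vanishing locus of $\sigma^{\wedge n}$ is closed in $\mathcal{X}$ and its image under the proper map $\mathcal{X}\to V^{(6)}$ is therefore closed.
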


\begin{proof}
Each map $\psi\in V^{(5)}$ yields a variety $X$. Given any family of varieties, the singular ones form a closed subset; this proves that $V^{(6)}$ is open in $V^{(5)}$.

Sufficient conditions to ensure that a smooth variety $X$ of dimension $2n$ will be an irreducible holomorphic symplectic manifold are $h^{1,0}(X)=0$, $h^{2,0}(X)=1$, and $K_X$ is trivial. In a family, the first two conditions are both open (in a flat family the Hodge numbers would even be constant). Then the third condition will also be open inside the open set where $h^{1,0}(X)=0$, since this ensures that the Picard group is discrete. Therefore $V^{(7)}$ is open in $V^{(6)}$.
\end{proof}

\subsection{Conclusion of the proof}

\begin{proof}{\bf (of Theorem~\ref{main})} 
We have described an association that assigns a classifying map
$$\psi\in\bigsqcup_{k=1}^{a(n+1)}V_k$$
to each Lagrangian fibration $\pi:X\rightarrow\P^n$ satisfying hypotheses $1$ to $4$. By Proposition~\ref{injective} this association is one-to-one. By definition, the subset
$$\bigsqcup_{k=1}^{a(n+1)}V_k^{(7)}\subset\bigsqcup_{k=1}^{a(n+1)}V_k$$
describes precisely those maps $\psi$ which arise as classifying maps of Lagrangian fibrations. In other words, there is a bijection between the set of Lagrangian fibrations satisfying hypotheses $1$ to $4$ and the set $\bigsqcup_{k=1}^{a(n+1)}V_k^{(7)}$. By Lemmas 8 to 13, $\bigsqcup_{k=1}^{a(n+1)}V_k^{(7)}$ is a constructible algebraic set inside $\bigsqcup_{k=1}^{a(n+1)}V_k$, so it consists of finitely many connected components. Each connected component then parametrizes a continuous family of Lagrangian fibrations, which are therefore deformation equivalent. It follows that there are finitely many Lagrangian fibrations satisfying hypotheses $1$ to $4$, up to deformation.
\end{proof}


\section{The hypotheses}

\subsection{Rank-one semi-stable singular fibres}

Matsushita~\cite{matsushita01} classified the codimension one singular fibres which can occur in a Lagrangian fibration of dimension four. Hwang and Oguiso~\cite{ho09i, ho11} extended this classification to higher dimensions (see also Matsushita~\cite{matsushita07} for the projective case). Our hypothesis $3$, that the generic singular fibre is a rank-one semi-stable degeneration of abelian varieties, corresponds to type $I_m$ or $A_{\infty}$ in Hwang and Oguiso's notation. More precisely, their classification is of the {\em characteristic 1-cycles\/}; if the (rank-one semi-stable) singular fibre consists of $k$ irreducible components then the characteristic 1-cycle will be either of type $I_m$ where $m$ is a multiple of $k$, or of type $A_{\infty}$.

Note that a rank $r$ semi-stable degeneration of an abelian variety is a toric compactification of a semi-abelian variety with rank $r$ toric part; it occurs over a codimension $r$ boundary component of a toric compactification of $\mathcal{A}_{(d_1,\ldots,d_n)}$ (see the book of Hulek, Kahn, and Weintraub~\cite{hkw93} for the abelian surface
case). However, by Hwang and Oguiso's classification, these higher rank semi-stable degenerations cannot occur as generic singular fibres of Lagrangian fibrations. In other words, hypothesis $3$ is equivalent to requiring that the generic singular fibre be semi-stable, as it would then automatically be rank-one as well.

When $n=1$, our hypothesis is that the elliptic K3 surface has only singular fibres of type $I_m$ in Kodaira's classification. This is not true for every elliptic K3 surface, but it is true for a generic elliptic K3 surface, which in fact has only nodal rational curves (type $I_1$) as singular fibres. We expect similar behaviour in higher dimensions.

\begin{conjecture}
Let $\pi:X\rightarrow\P^n$ be a Lagrangian fibration by abelian varieties. Then a generic deformation of $X$, which preserves the Lagrangian fibration, will have rank-one semi-stable degenerations as singular fibres in codimension one. Moreover, if $\pi:X\rightarrow\P^n$ admits a global section then we can restrict to deformations that also preserve the section.
\end{conjecture}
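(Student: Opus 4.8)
Since this is a conjecture, what follows is a strategy rather than a proof; the base case $n=1$ is classical and indicates the shape of the general argument. Elliptic $K3$ surfaces with a section form an irreducible $18$-dimensional family, each member admitting a Weierstrass equation $y^2=x^3+a(s,t)x+b(s,t)$ with $\deg a=8$, $\deg b=12$; for a generic choice of $(a,b)$ the discriminant $4a^3+27b^2$ has $24$ simple zeros at which $a$ and $b$ have no common zero, so every singular fibre is of type $I_1$. Each non-$I_m$ Kodaira type is cut out by an explicit closed condition on $(a,b)$ (higher-order vanishing of the discriminant, or a common zero of $a$ and $b$), so the set of fibrations with only rank-one semi-stable fibres is open, nonempty, and hence dense. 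The plan in higher dimensions is to reproduce this picture, replacing the Weierstrass family by a moduli space of Lagrangian fibrations and the explicit discriminant computation by deformation theory.

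First I would set up the moduli space. Fix a deformation type of irreducible holomorphic symplectic manifolds together with a rank-two lattice $\Lambda_0$ spanned by a primitive nef class $F$ with $q(F)=0$ and a second class representing a section, and let $\M$ be the moduli space of pairs $(X,\pi)$ consisting of such a manifold, the Lagrangian fibration induced by $F$, and a section---equivalently, the locus in the marked period domain where the N\'eron-Severi lattice contains $\Lambda_0$ compatibly. One shows that $\M$ is smooth (deformations of $X$ preserving the fibration and the section are unobstructed, by Bogomolov-Tian-Todorov together with the fact that the fibration is rigidly determined by the single class $F$) and that it has finitely many connected components; fix one, which is then irreducible.

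Next, let $\M^{\mathrm{good}}\subset\M$ be the locus of pairs for which the generic singular fibre over every irreducible component of the discriminant $\Delta$ is rank-one semi-stable. Using the Hwang-Oguiso classification of codimension-one singular fibres---in particular that higher-rank semi-stable degenerations do not occur in codimension one for Lagrangian fibrations---together with the semicontinuity of the fibre type under specialization, the complement of $\M^{\mathrm{good}}$ is a countable union of closed subvarieties of $\M$, one for each bad combinatorial type (and, as in the $n=1$ case where all types $I_m^*$ fall under a single closed condition, finitely many conditions should in fact suffice), so $\M^{\mathrm{good}}$ is open. It then remains to show that $\M^{\mathrm{good}}\neq\varnothing$, for then $\M^{\mathrm{good}}$ is dense by irreducibility, and in any case the Baire category theorem handles the possibility of infinitely many bad types.

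The remaining, and main, step is to deform bad fibres away: given $(X,\pi)$ with a non-rank-one-semi-stable generic fibre over a component $\Delta_i$, produce a fibration-preserving, section-preserving deformation of $X$ in $\M$ whose discriminant is generically good over (the deformation of) $\Delta_i$ and no worse elsewhere; iterating over the finitely many bad components then lands in $\M^{\mathrm{good}}$. Restricting $\pi$ to a transversal disc gives a one-parameter degeneration of $(d_1,\ldots,d_n)$-polarized abelian varieties with a marked point, and one wants to show, using the theory of toroidal (Mumford-type) degenerations of polarized semi-abelian schemes with a rigidifying section, that its versal deformation has generic slice with only rank-one semi-stable special fibres---the higher-dimensional analogue of ``$I_0^*$ deforms to six $I_1$'s'', with Hwang and Oguiso's stable-reduction analysis supplying the combinatorial input. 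One then needs this local deformation to lift to a global one, via the unobstructedness above together with surjectivity of the restriction map on the first cohomology of the sheaf controlling deformations of the pair. The hard part is exactly this last step: in dimension $2n\ge 4$ there is no substitute for the explicit Weierstrass calculation, so both the local semi-stabilization statement (where the combinatorics of toroidal compactifications and the bookkeeping for the marked point are delicate) and the local-to-global lifting (where the geometry of the Lagrangian-fibration locus inside the moduli of holomorphic symplectic manifolds must be used essentially) require genuinely new input. A secondary difficulty, already visible for $n=1$, is that a cuspidal (irreducible, non-semi-stable) fibre is invisible to the N\'eron-Severi lattice, so the problem cannot be reduced to avoiding Noether-Lefschetz loci.
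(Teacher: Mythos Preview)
This statement is a conjecture, and the paper does not prove it; it only offers heuristic evidence in the Remark that follows. So there is no proof to compare against, only strategies. That said, your strategy and the paper's are genuinely different, and it is worth spelling out the contrast.

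The paper's approach is lattice-theoretic. Using the local Torelli theorem for Lagrangian fibrations, a generic deformation preserving the fibration and the section has Picard number $\rho=2$. Oguiso's generalized Shioda--Tate formula then forces most of the Hwang--Oguiso fibre types to contribute nontrivially to the N\'eron--Severi group, so they cannot appear when $\rho=2$. This leaves only the semi-stable types $I_m$, $A_\infty$, and the cuspidal type $II$; the conjecture is precisely the assertion that type $II$ does not occur generically. The virtue of this line is that it is short and uses tools already in place; its defect, which the paper acknowledges, is that it cannot touch type $II$.

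Your approach is deformation-theoretic: set up an irreducible moduli space $\mathcal{M}$, argue that the good locus $\mathcal{M}^{\mathrm{good}}$ is open, and then prove nonemptiness by explicitly deforming bad fibres away via a local model plus a local-to-global lifting. This is more ambitious and, if it could be carried out, would handle type $II$ directly rather than by exclusion. You correctly flag the two genuine obstacles: the local statement (that the versal deformation of a one-parameter degeneration generically has rank-one semi-stable special fibre) has no known analogue of the Weierstrass computation for $n\geq 2$, and the lifting step requires control over the restriction map on deformation cohomology that is not currently available. Your final remark, that cuspidal fibres are invisible to the N\'eron--Severi lattice, is exactly the point where the paper's argument stalls and where your alternative would have to do real work; in that sense the two strategies are complementary rather than competing.
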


\begin{remark}
A generic elliptic K3 surface which admits a section will have Picard number $\rho=2$. This means that, a priori, it can only have singular fibres of types $I_1$ and/or $II$, i.e., only nodal and/or cuspidal rational curves. All other singular fibres in Kodaira's classification would contribute non-trivially to the Picard group, and force $\rho>2$.

While this simple argument does not eliminate cuspidal rational curves, it does have the advantage of generalizing to higher dimensional Lagrangian fibrations. Firstly, the deformation theory is analogous to the corresponding theory for elliptic K3 surfaces (see Matsushita~\cite{matsushita05} and the author's article~\cite{sawon09}). In particular, the local Torelli theorem can be used to show that a generic Lagrangian fibration which admits a section will have Picard number $\rho=2$. Next, Oguiso~\cite{oguiso09} generalized the Shioda-Tate formula to fibrations by abelian varieties. Using this formula, one can show that most of the singular fibres in Hwang and Oguiso's classification contribute non-trivially to the Picard group. So if $\rho=2$, the Lagrangian fibration can only have generic singular fibres of types $I_m$, $A_{\infty}$, and/or $II$, i.e., only semi-stable degenerations and/or higher dimensional analogues of cuspidal rational curves. The conjecture above asserts that, generically, type $II$ singular fibres should not occur either.
\end{remark}

A more direct way to avoid the hypothesis of semi-stable singular fibres would be to generalize Theorem~\ref{main} so that it applies in the presence of any of the singular fibres on Hwang and Oguiso's (or Matsushita's) list. Now the canonical bundle formula~(\ref{Kodaira}) will apply in the general situation: the divisor $B$ precisely accounts for the singular fibres in codimension one which are not semi-stable. Moreover, Matsushita~\cite{matsushita07} has determined the contributions to $B_R$ for the various possible singular fibres. Notice that as the contribution $B_R$ of the ``bad'' singular fibres increases, the degree of the moduli part $J(X/\P^n)$ will decrease, so that we should get an even stronger bound on the degree of the classifying map $\bar{\phi}$ in these cases. The difficulty is in {\em defining\/} the extension $\bar{\phi}$ of $\phi$, since we don't have an appropriate compactification of $\mathcal{A}_{(d_1,\ldots,d_n)}$ whose boundary parametrizes all of the additional non-semi-stable singular fibres that we must allow.

\subsection{Existence of a section}

Let $\pi:X\rightarrow\P^n$ be a Lagrangian fibration that admits a global section. Away from the singular fibres, $X|_{\P^n\backslash\Delta}$ is a group scheme, as the fibres are abelian groups. Moreover, the semi-stable singular fibres are toric compactifications of semi-abelian varieties, so their smooth loci are also abelian groups. If instead $\pi:X\rightarrow\P^n$ does not admit a global section, then $X|_{\P^n\backslash\Delta}$ will be a torsor over an associated group scheme $U$ over $\P^n\backslash\Delta$. For example, $U$ could be defined as the relative Albanese variety of $X|_{\P^n\backslash\Delta}$.

\begin{question}
Can $U$ be compactified to produce a Lagrangian fibration $\pi_0:X^0\rightarrow\P^n$?
\end{question}

\begin{remark}
Assume that $X$ has semi-stable singular fibres, at least generically, i.e., over $\Delta\backslash\Delta_0$. Then we automatically obtain a partial compactification of $U$ by adding in the semi-abelian varieties over $\Delta\backslash\Delta_0$. These semi-abelian fibres can then be compactified in the same way as they are compactificed in $X$ itself, to produce a proper fibration $W$ over $\P^n\backslash\Delta_0$. There still remains the question of whether $W$ can be compactified to a Lagrangian fibration over $\P^n$. In general, there is no reason to expect such a compactification to exist (cf.\ Lemmas $12$ and $13$ above), but in this case we may be able to use the fact that the torsor $X|_{\P^n\backslash\Delta}$ can be compactified to a Lagrangian fibration $X$ over $\P^n$. Also, it is enough to show that $W$ can be compactified to an equidimensional fibration $\pi_0:X^0\rightarrow\P^n$, as the argument in Lemma~6 of~\cite{sawon08} would then show that we have a Lagrangian fibration.
\end{remark}

\begin{example}
O'Grady constructed a ten-dimensional holomorphic symplectic manifold by desingularizing a moduli space of sheaves on a K3 surface~\cite{ogrady99}. For certain K3 surfaces $S$, O'Grady's space admits a Lagrangian fibration, which was further studied by Rapagnetta~\cite{rapagnetta08}. This Lagrangian fibration is closely related to the Beauville-Mukai system on $\mathrm{Hilb}^5S$. Indeed, over $\P^5\backslash\Delta$, the Beauville-Mukai system is a torsor over the O'Grady-Rapagnetta Lagrangian fibration (see Section~5.3 of~\cite{sawon04}). These two fibrations are also locally isomorphic as fibrations over $\P^5\backslash\Delta_0$. However, it is worth noting that they have non-isomorphic singular fibres in codimension two; i.e., we add different singular fibres over $\Delta_0$ in order to compactify these two fibrations over $\P^5\backslash\Delta_0$.
\end{example}

Let us now assume that the above question has been answered in the affirmative. Thus to each Lagrangian fibration $\pi:X\rightarrow\P^n$ we can associate a Lagrangian fibration $\pi_0:X^0\rightarrow\P^n$ such that $X|_{\P^n\backslash\Delta}$ is a torsor over $X^0|_{\P^n\backslash\Delta}$. We call $X$ a {\em compactified torsor} over $X^0$. Note that $\pi_0:X^0\rightarrow\P^n$ admits a section, or at least a rational section (over $\P^n\backslash\Delta$).

\begin{conjecture}
Up to deformation, there are finitely many compactified torsors $\pi:X\rightarrow\P^n$ associated to a fixed Lagrangian fibration $\pi_0:X^0\rightarrow\P^n$.
\end{conjecture}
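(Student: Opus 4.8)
We outline a possible approach, modelled on the proof of Theorem~\ref{main} with the classifying map replaced by a Tate--Shafarevich class (the role of torsors in Gross's finiteness theorem for elliptic Calabi--Yau threefolds~\cite{gross94} is a useful precedent). The key initial observation is that, since $\pi_0:X^0\rightarrow\P^n$ is fixed, every compactified torsor $X$ over it has the \emph{same} relative Jacobian $X^0$, hence the same classifying map: over $\P^n\backslash\Delta$ the fibre $X_t$ and its Albanese $X^0_t$ carry the same polarized weight-one Hodge structure, so the machinery of Section~2 is blind to the difference. What distinguishes $X$ from $X^0$ is the class of the torsor $X$ over the smooth locus of $X^0$. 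Let $\mathcal{X}^0$ denote the sheaf of holomorphic sections of that smooth locus over $\P^n\backslash\Delta_0$ (a N\'eron-model type sheaf) and put $\Shah(X^0):=\H^1(\P^n\backslash\Delta_0,\mathcal{X}^0)$. I would first prove the torsor analogue of Proposition~\ref{injective}: a compactified torsor is recovered, up to finitely many choices, from its class $\eta\in\Shah(X^0)$. Over $\P^n\backslash\Delta$ the class $\eta$ reconstructs the torsor; over a generic point of a component of $\Delta$ one must also choose how to compactify the degenerate (possibly ``twisted'', multiple-fibre type) fibre, but by the Hwang--Oguiso classification of codimension-one fibres there are finitely many combinatorial possibilities along each of the finitely many components of $\Delta$, giving a finite set $F$ of filling data; stable reduction theory~\cite{ho11} selects the one that occurs, and Matsusaka--Mumford~\cite{mm64} extends the fibration across the codimension-two locus $\Delta_0$. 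This yields, modulo $\mathrm{Aut}(X^0/\P^n)$, an injection from isomorphism classes of compactified torsors over $X^0$ into $\Shah(X^0)\times F$.

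Next I would analyse $\Shah(X^0)$ via the exponential exact sequence of the N\'eron model,
$$0\longrightarrow\La\longrightarrow\V\longrightarrow\mathcal{X}^0\longrightarrow 0,$$
with $\La$ the (constructible) sheaf of period lattices and $\V$ the sheaf of sections of the rank-$n$ Lie-algebra bundle. By Matsushita's identifications $R^i\pi_{0*}\O_{X^0}\cong\Omega^i_{\P^n}$, this bundle extends to a coherent sheaf $\bar{\V}$ on the projective variety $\P^n$, so $\H^1(\P^n\backslash\Delta_0,\V)\cong\H^1(\P^n,\bar{\V})$ (the codimension-two set $\Delta_0$ may be excised from $\H^0$ and $\H^1$ of a locally free sheaf) is finite-dimensional, while $\H^1$ and $\H^2$ of $\La$ are finitely generated abelian groups because $\P^n\backslash\Delta_0$ is a variety. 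The long exact sequence then presents $\Shah(X^0)$ as an extension $0\to G\to\Shah(X^0)\to Q\to 0$ of a finitely generated abelian group $Q\subseteq\H^2(\P^n\backslash\Delta_0,\La)$ by the \emph{connected} complex Lie group $G:=\H^1(\P^n,\bar{\V})/(\mathrm{image\ of\ }\H^1(\P^n\backslash\Delta_0,\La))$. Two compactified torsors with the same image in $Q$ and the same datum in $F$ differ by a point of $G$; since $G$ is connected, the corresponding family of torsors compactifies over the locus where the openness conditions of Section~3 hold, so each $G$-coset contributes only finitely many deformation classes (generically one). Hence deformation classes of compactified torsors over $X^0$ inject into $Q\times F$ up to a bounded finite multiplicity.

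It remains to bound $Q$. A compactified torsor $X$ carries a line bundle restricting to a polarization of type $(d_1,\ldots,d_n)$ on the generic fibre --- inherited from $X^0$ over $\P^n\backslash\Delta_0$ and extended across $\Delta_0$ by Matsusaka--Mumford --- and by the relative Ogg--Shafarevich principle a torsor under an abelian scheme admitting a fibrewise polarization of bounded type has class killed by an integer $N=N(n,d_1,\ldots,d_n)$: multiplication by $N$ on the abelian scheme induces multiplication by $N$ on $\Shah$, and pushing the torsor forward along $[N]$ produces a section. Thus every class coming from a compactified torsor lies in the $N$-torsion subgroup of $\Shah(X^0)$, hence maps into the $N$-torsion subgroup of the finitely generated group $Q$, which is finite. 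Combined with the finiteness of $F$ and of the multiplicities above, this gives finitely many compactified torsors over $X^0$ up to deformation.

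The main obstacle is the last step: converting ``$X$ admits a fibrewise polarization'' into an effective torsion bound. Two points need care. First, one must verify that a compactified torsor really does carry such a line bundle of \emph{controlled} type --- the primitive polarization on the fibres should have type dividing $(d_1,\ldots,d_n)$ --- and extend it across $\Delta_0$; in the merely K\"ahler, non-projective case (where the fibres are complex tori rather than abelian varieties) one may instead have to argue directly that the torsor class is torsion, via a K\"ahler-class or Hodge-theoretic constraint. Second, the Ogg--Shafarevich statement, classical for torsors over fields, should be formulated and proved for analytic torsors over $\P^n\backslash\Delta_0$. A secondary difficulty is the torsor analogue of Proposition~\ref{injective} in the first step, in particular the precise bookkeeping of the finite set $F$ of twisted-fibre fillings via the Hwang--Oguiso classification.
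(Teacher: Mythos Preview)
The statement you are addressing is presented in the paper as a \emph{conjecture}, not a theorem: the paper offers no proof, only the three illustrative examples that follow (elliptic K3 surfaces, elliptic Calabi--Yau threefolds via Gross, and the Beauville--Mukai/O'Grady--Rapagnetta pair). So there is no paper proof to compare against; what you have written is a strategy toward an open problem, and you correctly flag it as such.

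Your outline is very much in the spirit of the paper's own heuristic. The paper suggests that deformation classes of compactified torsors should be governed by the connected components of the Tate--Shafarevich/Brauer group, and your exponential sequence $0\to\Lambda\to\mathcal{V}\to\mathcal{X}^0\to 0$ makes this precise. The identification $\mathcal{V}\cong R^1\pi_{0*}\mathcal{O}_{X^0}\cong\Omega^1_{\P^n}$ via Matsushita, giving $\H^1(\P^n,\bar{\mathcal{V}})\cong\mathbb{C}$ and hence a connected $G$, is exactly the mechanism behind the K3 example in the paper (where $\H^2(S^0,\mathcal{O})\cong\mathbb{C}$ forces the Brauer group to be connected). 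Your decomposition into a connected piece $G$ and a discrete quotient $Q\subset\H^2(\Lambda)$ is the right framework.

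The genuine gap is the one you name: the torsion bound on the image in $Q$. Your Ogg--Shafarevich argument requires a fibrewise polarization of controlled type on $X$, and already for non-projective elliptic K3 surfaces the torsor class in $\Shah$ is \emph{not} torsion; there the argument is rescued only because $Q=0$. In higher dimensions $Q$ need not vanish (the paper notes $\H^3(X^0,\mathbb{Z})$ may be nonzero), so one really needs a reason why the image in $Q$ of a \emph{compactifiable} torsor is torsion. The paper's closing remark suggests the bound should involve the number of irreducible components of the generic singular fibre --- data that enters through the component-group sheaf of the N\'eron model and hence through $\H^2(\Lambda)$ --- rather than through the polarization type alone. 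Bringing that combinatorial input into your Step~3 is likely what is needed, and is not yet present in your sketch.
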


\begin{example}
Let $S\rightarrow\P^1$ be an elliptic K3 surface with only nodal rational curves as singular fibres. Then $S$ is a compactified torsor over its relative Jacobian $S^0\rightarrow\P^1$. The set of all compactified torsors over a fixed $S^0$ forms a group known as the {\em Tate-Shafarevich group\/}. For K3 surfaces it is isomorphic to the analytic Brauer group $\H^2(S^0,\mathcal{O}^*)$, which fits into the exact sequence
$$\ldots\rightarrow\H^2(S^0,\mathcal{O})\rightarrow\H^2(S^0,\mathcal{O}^*)\rightarrow\H^3(S^0,\mathbb{Z})\rightarrow\ldots$$
Since $\H^2(S^0,\mathcal{O})\cong\mathbb{C}$ and $\H^3(S^0,\mathbb{Z})$ vanishes, the Brauer group is connected, and hence all compactified torsors over $S^0$ are deformation equivalent (of course, all K3 surfaces are deformation equivalent).
\end{example}

\begin{example}
An elliptic Calabi-Yau threefold $Y$ will also have a relative Jacobian $Y^0$, but in this case we find that $\H^2(Y^0,\mathcal{O})$ vanishes while $\H^3(Y^0,\mathbb{Z})$, and hence $\H^2(Y^0,\mathcal{O}^*)$, is discrete. Gross~\cite{gross94} shows that the Tate-Shafarevich group is finite in this case.
\end{example}

\begin{example}
For Lagrangian fibrations, $\H^2(X^0,\mathcal{O})\cong\mathbb{C}$ while $\H^3(X^0,\mathbb{Z})$ may or may not vanish. In some cases, the Tate-Shafarevich group is connected (see~\cite{sawon08ii}). In other cases, it has several connected components (the Beauville-Mukai system in ten dimensions is a compactified torsor over the O'Grady-Rapagnetta fibration, but they are {\em not} deformation equivalent). In general, we expect the number of compactified torsors over $X^0$ (up to deformation) to be bounded by a formula involving the number of irreducible components of the generic singular fibre of $\pi_0:X^0\rightarrow\P^n$. We assume that the generic singular fibre is semi-stable, so this is equal to the number of connected components of the corresponding semi-abelian variety; we believe this can be related to the number of components of the Brauer group.
\end{example}

\subsection{Maximal variation of abelian varieties}

Our fourth hypothesis was that $\pi:X\rightarrow\P^n$ describes a maximal variation of abelian varieties around a generic point $t\in\P^n$. In other words, a sufficiently small neighbourhood $V$ of $t$ is embedded in $\mathcal{A}_{d_1,\ldots,d_n}$ by the classifying map $\phi$, or equivalently, the derivative $(d\phi)_t$ has rank $n$ at a generic point $t\in\P^n$.

\begin{example}
If an elliptic K3 surface does {\em not\/} describe a maximal variation then $(d\phi)_t$ must have rank $0$ at a generic point, but this means that $d\phi$ vanishes identically and $\phi$ is constant. An example of such an elliptic K3 surface, with constant functional invariant, is given by constructing a Kummer K3 surface from an abelian surface $A$ which is fibred over an elliptic curve $E$. This gives an elliptic fibration
$$\mathrm{Blow}(A/\pm 1)\rightarrow A/\pm 1 \rightarrow E/\pm 1\cong\P^1$$
which is locally isotrivial: every smooth fibre is isomorphic to a fixed elliptic curve.

Note that this elliptic K3 surface has four singular fibres of Kodaira type $I^*_0$, sitting above the four branch points of the double cover $E\rightarrow\P^1$. These are certainly {\em not\/} semi-stable singular fibres. Indeed, a locally isotrivial elliptic K3 surface cannot have semi-stable singular fibres: otherwise the functional invariant $J$ would extend to all of $\P^1$, and since it is constant, every fibre would be smooth. So an elliptic K3 surface with semi-stable singular fibres must also describe a maximal variation.
\end{example}

We expect a similar dichotomy in higher dimensions.

\begin{conjecture}[Matsushita]
Let $\pi:X\rightarrow\P^n$ be a Lagrangian fibration. After choosing a local section over a neighbourhood of a generic point $t\in\P^n$, we can define a classifying map $\phi$ locally. Then we either have a maximal variation (the image of $\phi$ in $\mathcal{A}_{d_1,\ldots,d_n}$ is $n$-dimensional; equivalently, $(d\phi)_t$ is of rank $n$) or a minimal variation ($\phi$ is constant and $d\phi$ vanishes).
\end{conjecture}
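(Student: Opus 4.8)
The plan is to reformulate the differential of the local classifying map as a (meromorphic) symmetric cubic differential on $\mathbb{P}^n$, and then to show that such a cubic arising from a Lagrangian fibration is either identically zero or generically non-degenerate in every direction.

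First, I would set up the linear algebra at a generic $t\in\mathbb{P}^n$. Because $X_t$ is Lagrangian, $\sigma$ identifies $N_{X_t/X}\cong\Omega^1_{X_t}$, and since $N_{X_t/X}\cong\mathcal{O}_{X_t}\otimes T_t\mathbb{P}^n$ this gives a canonical isomorphism $T_t\mathbb{P}^n\cong H^0(X_t,\Omega^1_{X_t})=:W_t$; globally over $\mathbb{P}^n\setminus\Delta$ it gives $T_{\mathbb{P}^n}\cong\pi_*\Omega^1_{X/(\mathbb{P}^n\setminus\Delta)}=F^1(R^1\pi_*\mathbb{C})$, with Matsushita's theorem $R^i\pi_*\mathcal{O}_X\cong\Omega^i_{\mathbb{P}^n}$ controlling the extension across $\Delta$. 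Under these identifications and the polarization duality $F^1\cong(R^1\pi_*\mathcal{O}_X)^*$, Griffiths transversality presents $d\phi$ as a map $T_{\mathbb{P}^n}\to\mathrm{Hom}(T_{\mathbb{P}^n},\Omega^1_{\mathbb{P}^n})$ over $\mathbb{P}^n\setminus\Delta$, and the Donagi--Markman cubic condition for algebraically completely integrable Hamiltonian systems (which applies since $\pi$ over $\mathbb{P}^n\setminus\Delta$ is such a system) forces this to land in $\mathrm{Sym}^3\Omega^1_{\mathbb{P}^n}$. Thus $d\phi$ is equivalent to a section $c$ of $\mathrm{Sym}^3\Omega^1_{\mathbb{P}^n}$ over $\mathbb{P}^n\setminus\Delta$, meromorphic along $\Delta$ with pole order bounded in terms of the (quasi-unipotent) monodromy, via $d\phi_t(u)=c_t(u,\cdot,\cdot)$. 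In this language, maximal variation means $u\mapsto c_t(u,\cdot,\cdot)\in\mathrm{Sym}^2W_t^*$ is injective for generic $t$, and minimal variation means $c\equiv 0$.

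Second, let $r$ be the generic rank of $d\phi$, equivalently of $u\mapsto c_t(u,\cdot,\cdot)$; the task is to exclude $0<r<n$. Assuming this, $\mathcal{K}:=\ker c\subset T_{\mathbb{P}^n}$ is a holomorphic distribution of rank $n-r$ on a dense Zariski-open subset. It is integrable: this is the standard fact that the kernel of the differential of a period map of a polarized variation of Hodge structure is a foliation whose leaves are the maximal isotriviality loci (it can also be read off from the integrability identity for the variable cubic $c$), and moreover its leaves are algebraic, being components of the fibres of the rational map $\bar\phi:\mathbb{P}^n\dashrightarrow\mathcal{A}^*_{d_1,\ldots,d_n}$. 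On (the universal cover of) a general leaf $L$ the induced family $X|_L\to L$ has constant period and a local section, hence is a trivial family $A_0\times\widetilde{L}$; this equips $X$ over $\mathbb{P}^n\setminus\Delta$ with a rank $n-r$ foliation $\mathcal{H}$ transverse to the fibres, projecting isomorphically onto $\mathcal{K}$ and tangent to the Gauss--Manin-flat directions.

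Third --- and this is the heart of the matter --- I would derive a contradiction from the coexistence of this product structure with the global holomorphic symplectic form. Restricting $\sigma$ to the integrable subsheaf $T_{X/(\mathbb{P}^n\setminus\Delta)}\oplus\mathcal{H}\subset T_X$ (where $\sigma$ on $T_{X/(\mathbb{P}^n\setminus\Delta)}$ is identified with the tautological pairing against $\mathcal{K}$), one computes that $\sigma$ has rank at most $2(n-r)<\dim$ in these directions when $r>0$, so that its radical is a non-trivial integrable distribution of vertical (fibrewise) directions; chasing this through on $\pi^{-1}(\overline{L})$ for general leaf closures should either split off a symplectic sub-bundle of $X$ --- whence, via the Beauville--Bogomolov decomposition theorem, a finite \'etale cover of $X$ splits as a product, impossible for a Lagrangian fibration over a projective space --- or force a proper family of abelian subvarieties inside $X|_{\mathbb{P}^n\setminus\Delta}$ whose quotient contradicts the stability of $\Omega^1_{\mathbb{P}^n}$ (equivalently, forces $c$ to extend to a global section of $\mathrm{Sym}^3\Omega^1_{\mathbb{P}^n}$, which vanishes). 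The principal obstacle is exactly this globalization: one must promote the local product structure over the leaves, and the attendant rational/Hodge-theoretic data, to a statement about $X$ or $\mathbb{P}^n$, and the rationality of the sub-space $\mathcal{K}_t\subset F^1_t$ is not automatic, so genuine input is needed --- an algebraic-integrability analysis of $\mathcal{K}$ together with the limiting mixed Hodge structures along $\Delta$, or a direct study of the characteristic foliation of $\sigma|_{\pi^{-1}(\overline{L})}$. (For $n=1$ the statement is trivial, as noted in Subsection~4.3, the only possible ranks being $0$ and $1$.)
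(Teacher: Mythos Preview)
The paper does not prove this statement: it is presented as a \emph{conjecture} (attributed to Matsushita) in Subsection~4.3, with no proof offered, only a remark that its truth would render hypothesis~4 redundant in the presence of hypothesis~3. So there is nothing to compare your proposal against.

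As for the proposal itself, it is not a proof and you are candid about this. The first two steps --- reinterpreting $d\phi$ via the Donagi--Markman cubic as a section of $\mathrm{Sym}^3\Omega^1_{\P^n}$ over $\P^n\backslash\Delta$, and observing that the kernel of $d\phi$ defines an integrable foliation whose leaves are isotriviality loci --- are correct and well known. The difficulty lies entirely in your third step, and your own language (``should either split off'', ``genuine input is needed'') makes clear that you have not overcome it. In particular: the claim that $\sigma$ restricted to $T_{X/(\P^n\backslash\Delta)}\oplus\mathcal{H}$ has rank at most $2(n-r)$ needs a precise justification (it is not obvious that $\mathcal{H}$ is isotropic for $\sigma$, nor that the pairing between vertical directions and $\mathcal{H}$ degenerates in the way you suggest); the passage from a local product structure along leaves to a global splitting via Beauville--Bogomolov requires the leaves to compactify nicely inside $X$, which you have not established; and the alternative route through ``forcing $c$ to extend to a global section of $\mathrm{Sym}^3\Omega^1_{\P^n}$'' is asserted without any mechanism. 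These are exactly the obstacles that made the statement a conjecture rather than a theorem at the time the paper was written.
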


\begin{remark}
As with elliptic K3 surfaces, a Lagrangian fibration which is both a minimal variation and which has semi-stable singular fibres must be smooth up to codimension one, which is impossible. So Matushita's conjecture implies that a Lagrangian fibration whose generic singular fibres are semi-stable must also describe a maximal variation.
\end{remark}

\begin{flushleft}
Department of Mathematics\hfill sawon@email.unc.edu\\
University of North Carolina\hfill www.unc.edu/$\sim$sawon\\
Chapel Hill NC 27599-3250\\
USA\\
\end{flushleft}


\begin{thebibliography}{XXX}








\bibitem{bpv84} W.\ Barth, C.\ Peters, and A.\ Van de Ven,
{\em Compact complex surfaces\/},
Springer-Verlag, Berlin, 1984.




















\bibitem{gross94} M.\ Gross,
{\em A finiteness theorem for elliptic Calabi-Yau threefolds\/},
Duke Math.\ J.\ {\bf 74} (1994), no.\ 2, 271--299.


\bibitem{grushevsky09} S.\ Grushevsky,
{\em Geometry of $\mathcal{A}_g$ and its compactifications\/},
Algebraic geometry--Seattle 2005. Part 1, 193--234,
Proc.\ Sympos.\ Pure Math., {\bf 80}, Part 1, Amer.\ Math.\ Soc., Providence, RI, 2009.

\bibitem{hacon04} C.\ Hacon,
{\em A derived category approach to generic vanishing\/},
J.\ Reine Angew.\ Math.\ {\bf 575} (2004), 173--187.








\bibitem{hkw93} K.\ Hulek, C.\ Kahn, and S.\ Weintraub,
{\em Moduli spaces of abelian surfaces: compactification,
degeneration, and theta functions\/},
de Gruyter Expositions in Mathematics {\bf 12}, Walter de Gruyter,
1993.

\bibitem{hs04} K.\ Hulek and G.\ Sankaran,
{\em The nef cone of toroidal compactifications of $\mathcal{A}_4$\/},
Proc.\ London Math.\ Soc.\ {\bf 88} (2004), 659--704.






\bibitem{huybrechts03} D.\ Huybrechts,
{\em Finiteness results for compact hyperk{\"a}hler manifolds\/},
J.\ Reine Angew.\ Math.\ {\bf 558} (2003), 15--22.



\bibitem{hwang08} J.-M.\ Hwang,
{\em Base manifolds for fibrations of projective irreducible symplectic manifolds\/},
Invent.\ Math.\ {\bf 174} (2008), no.\ 3, 625--644.

\bibitem{ho09i} J.-M.\ Hwang and K.\ Oguiso,
{\em Characteristic foliation on the discriminant hypersurface of a holomorphic Lagrangian fibration\/},
Amer.\ J.\ Math.\ {\bf 131} (2009), no.\ 4, 981--1007.

\bibitem{ho11} J.-M.\ Hwang and K.\ Oguiso,
{\em Multiple fibers of holomorphic Lagrangian fibrations\/},
Commun.\ Contemp.\ Math.\ {\bf 13} (2011), no.\ 2, 309--329. 

\bibitem{ho10} J.-M.\ Hwang and K.\ Oguiso,
{\em Local structure of principally polarized stable Lagrangian fibrations\/},
preprint {\bf arXiv:1007.2043}.

\bibitem{kawamata98} Y.\ Kawamata,
{\em Subadjunction of log canonical divisors. II\/},
Amer.\ J.\ Math.\ {\bf 120} (1998), no.\ 5, 893--899.




\bibitem{kollar86i} J.\ Koll{\'a}r,
{\em Higher direct images of dualizing sheaves. I\/},
Ann.\ of Math.\ (2) {\bf 123} (1986), no.\ 1, 11--42.



\bibitem{kollar07} J.\ Koll{\'a}r,
{\em Kodaira's canonical bundle formula and adjunction\/},
Flips for 3-folds and 4-folds, 134--162,
Oxford Lecture Ser.\ Math.\ Appl., {\bf 35}, Oxford Univ.\ Press, Oxford, 2007.

\bibitem{kollar08} J.\ Koll{\'a}r,
{\em Hulls and husks\/},
preprint {\bf arXiv0805:0576}.

\bibitem{kollar10} J.\ Koll{\'a}r,
{\em Exercises in the birational geometry of algebraic varieties.\/},
Analytic and algebraic geometry, 495--524, IAS/Park City Math.\ Ser.\ {\bf 17}, Amer.\ Math.\ Soc., Providence, RI, 2010.

\bibitem{ksc04} J.\ Koll{\'a}r, K.E.\ Smith, and A.\ Corti,
{\em Rational and nearly rational varieties.\/}
Cambridge Studies in Advanced Mathematics {\bf 92}, Cambridge University Press, Cambridge, 2004.






\bibitem{mm64} T.\ Matsusaka and D.\ Mumford,
{\em Two fundamental theorems on deformations of polarized varieties\/},
Amer.\ J.\ Math.\ {\bf 86} (1964), 668--684.


\bibitem{matsushita99} D.\ Matsushita,
{\em On fibre space structures of a projective irreducible symplectic
manifold\/},
Topology {\bf 38} (1999), no.\ 1, 79--83.
Addendum, Topology {\bf 40} (2001), no.\ 2, 431--432.

\bibitem{matsushita01} D.\ Matsushita,
{\em On singular fibres of Lagrangian fibrations over holomorphic symplectic manifolds\/},
Math.\ Ann.\ {\bf 321} (2001), no.\ 4, 755--773.

\bibitem{matsushita05} D.\ Matsushita,
{\em Higher direct images of dualizing sheaves of Lagrangian
  fibrations\/},
Amer.\ J.\ Math.\ {\bf 127} (2005), no.\ 2, 243--259.

\bibitem{matsushita07} D.\ Matsushita,
{\em A canonical bundle formula for projective Lagrangian fibrations\/},
preprint {\bf arXiv:0701.0122}.




\bibitem{mumford08} D.\ Mumford,
{\em Abelian varieties.\/}
Corrected reprint of the second (1974) edition. Tata Institute of Fundamental Research Studies in Mathematics {\bf 5}, Hindustan Book Agency, New Delhi, 2008.



\bibitem{ogrady99} K.\ O'Grady,
{\em Desingularized moduli spaces of sheaves on a K3\/},
J.\ Reine Angew.\ Math.\ {\bf 512} (1999), 49--117. 

\bibitem{oguiso09} K.\ Oguiso,
{\em Shioda-Tate formula for an abelian fibered variety and applications\/},
J.\ Korean Math.\ Soc.\ {\bf 46} (2009), no.\ 2, 237--248. 

\bibitem{rapagnetta08} A.\ Rapagnetta,
{\em On the Beauville form of the known irreducible symplectic varieties\/},
Math.\ Ann.\ {\bf 340} (2008), no.\ 1, 77--95. 



\bibitem{sawon04} J.\ Sawon,
{\em Derived equivalence of holomorphic symplectic manifolds\/},
in Algebraic Structures and Moduli Spaces, CRM Proc.\ Lecture Notes {\bf 38}, AMS, 2004, 193--211.

\bibitem{sawon08i} J.\ Sawon,
{\em On the discriminant locus of a Lagrangian fibration\/},
Math.\ Ann.\ {\bf 341} (2008), no.\ 1, 201--221.

\bibitem{sawon08ii} J.\ Sawon,
{\em Twisted Fourier-Mukai transforms for holomorphic symplectic four-folds\/},
Adv.\ Math.\ {\bf 218} (2008), no.\ 3, 828--864.

\bibitem{sawon09} J.\ Sawon,
{\em Deformations of holomorphic Lagrangian fibrations\/},
Proc.\ Amer.\ Math.\ Soc.\ {\bf 137} (2009), no.\ 1, 279--285.

\bibitem{sawon08} J.\ Sawon,
{\em On Lagrangian fibrations by Jacobians I\/},
preprint {\bf arXiv:0803.1186v3}.










\end{thebibliography}
\end{document}